\newtheoremstyle{myplain}
  {}
  {}
  {\itshape}
  {}
  {\bfseries}
  {.}
  {5pt plus 1pt minus 1pt}
  {}
\newtheorem{theorem}{Theorem}[section]
\newtheorem{proposition}[theorem]{Proposition}
\newtheorem{lemma}[theorem]{Lemma}
\newtheorem{corollary}[theorem]{Corollary}
\newtheoremstyle{defi}
  {}
  {}
  {}
  {}
  {\scshape}
  {.}
  {5pt plus 1pt minus 1pt}
  {}
\theoremstyle{definition}
\newtheorem{definition}[theorem]{Definition}
\theoremstyle{remark}
\newtheorem{remark}[theorem]{Remark}
\numberwithin{equation}{section}
\title{\textbf{
Strong instability of standing waves for nonlinear Schr\"odinger equations with attractive inverse power potential
}}
\author{Noriyoshi Fukaya and Masahito Ohta}
\date{}
\begin{document}
\maketitle
\thispagestyle{plain.scrheadings}
\begin{abstract}
We study the strong instability of standing waves $e^{i\omega t}\phi_\omega(x)$ for nonlinear Schr\"{o}dinger equations with an $L^2$-supercritical nonlinearity and an attractive inverse power potential,
where $\omega\in\mathbb{R}$ is a frequency, 
and $\phi_\omega\in H^1(\mathbb{R}^N)$ is a ground state of the corresponding stationary equation.
Recently, 
for nonlinear Schr\"odinger equations with a harmonic potential,
Ohta~(2018) proved that if $\partial_\lambda^2S_\omega(\phi_\omega^\lambda)|_{\lambda=1}\le0$,
then the standing wave is strongly unstable,
where $S_\omega$ is the action, and
$\phi_\omega^\lambda(x)\mathrel{\mathop:}=\lambda^{N/2}\phi_\omega(\lambda x)$
is the scaling, which does not change the $L^2$-norm.
In this paper, we prove the strong instability under the same assumption as the above-mentioned in inverse power potential case.
Our proof is applicable to nonlinear Schr\"odinger equations with other potentials such as an attractive Dirac delta potential.
\end{abstract}
\footnotetext[0]{2010 \textit{Mathematics Subject Classification}. 
35Q55,
35B35}
\section{Introduction}

In this paper,
we consider the nonlinear Schr\"odinger equation with an attractive inverse power potential
\[\tag{NLS}\label{nls}
i\partial_tu
=-\Delta u
-\frac{\gamma}{|x|^{\alpha}}u
-|u|^{p-1}u,\quad
(t,x)\in\mathbb{R}\times\mathbb{R}^N,
\]
where 
\begin{gather}\label{asmp}
N\in\mathbb{N},\quad
\gamma>0,\quad
0<\alpha<\min\{2,N\},\quad
1+\frac{4}{N}<p<1+\frac4{N-2},
\end{gather}
and $u\colon\mathbb{R}\times\mathbb{R}^{N}\to\mathbb{C}$ is an unknown function of $(t,x)\in\mathbb{R}\times\mathbb{R}^N$.
Here, $1+4/(N-2)$ stands for $\infty$ if $N=1$ or $2$.


Let us consider the Cauchy problem for \eqref{nls}.
Since the potential $V(x)\mathrel{\mathop:}=-\gamma|x|^{-\alpha}$ belongs to $(L^{r}+L^\infty)(\mathbb{R}^N)$
for some $r>\min\{1,N/2\}$ under the assumption~\eqref{asmp},
the multiplication operator $v\mapsto V(x)v$ is continuous from $H^1(\mathbb{R}^N)$ to $(L^{\rho'}+L^2)(\mathbb{R}^N)$ for some 
$\rho\in[2,2N/(N-2))$,
and thus, the potential energy $\int_{\mathbb{R}^N} V(x)|v(x)|^2\,dx$ is well-defined on $H^1(\mathbb{R}^N)$.
Therefore, the local well-posedness of \eqref{nls} in the energy space $H^1(\mathbb{R}^N)$ follows from the standard theory, 
e.g.\ \cite[Theorems~3.3.5, 3.3.9, Proposition~4.2.3]{Cazenave}.
More precisely,
for each $u_0\in H^1(\mathbb{R}^N)$,
there exist a maximal interval $I_{\max}=[0,T^+)\subset\mathbb{R}$
with $T^+=T^+(u_0)\in(0,\infty]$ and a unique solution 
$u\in C(I_{\max},H^1(\mathbb{R}^N))$ of \eqref{nls} with $u(0)=u_0$ such that if $T^+<\infty$,
then 
$\lim_{t\nearrow T^+}\|u(t)\|_{H^1}=\infty$.
Here, if $T^+<\infty$,
we say that the solution $u(t)$ \textit{blows up in finite time}.
Moreover,
\eqref{nls} satisfies the two conservation laws 
\begin{gather*}
E(u(t))=E(u_0),\quad
\|u(t)\|_{L^2}
=\|u_0\|_{L^2}
\end{gather*}
for all $t\in I_{\max}$,
where
\[
E(v)
\mathrel{\mathop:}=\frac12\|\nabla v\|_{L^2}^2
-\frac{\gamma}2\int_{\mathbb{R}^N}\frac{|v(x)|^2}{|x|^\alpha}\,dx
-\frac{1}{p+1}\|v\|_{L^{p+1}}^{p+1}
\]
is the energy.

By a \textit{standing wave},
we mean a solution of \eqref{nls} with the form $e^{i\omega t}\phi(x)$,
where $\omega\in\mathbb{R}$ is a frequency,
and $\phi\in H^1(\mathbb{R}^N)$ is a nontrivial solution of the stationary equation
\begin{equation}\label{sp}
-\Delta\phi
+\omega\phi
-\frac{\gamma}{|x|^{\alpha}}\phi
-|\phi|^{p-1}\phi
=0,\quad
x\in\mathbb{R}^N.
\end{equation}
Eq.\ \eqref{sp} can be written as $S_\omega'(\phi)=0$,
where 
\[
S_\omega(v)
\mathrel{\mathop:}=E(v)+\frac{\omega}{2}\|v\|_{L^2}^2
\]
is the action.
The following existence and variational characterization of ground states by using the Nehari functional
\begin{align*}
K_\omega(v)
&\mathrel{\mathop:}=\partial_\lambda S_\omega(\lambda v)|_{\lambda=1}
=\langle S_\omega'(v),v\rangle \\
&=\|\nabla v\|_{L^2}^2
+\omega\|v\|_{L^2}^2
-\gamma\int_{\mathbb{R}^N}\frac{|v(x)|^2}{|x|^\alpha}\,dx
-\|v\|_{L^{p+1}}^{p+1}
\end{align*}
are known (see \cite[Remarks~1.2 and 1.3]{FO-s}),
where a \textit{ground state} is a nontrivial solution of \eqref{sp} with the least action.

\begin{proposition} \label{prop:GS}
Assume \eqref{asmp} and 
\begin{equation}\label{omega}
\omega>
\omega_0
\mathrel{\mathop:}=-\inf\left\{\,\|\nabla v\|_{L^2}^2-\gamma\int_{\mathbb{R}^N}\frac{|v(x)|^2}{|x|^\alpha}\,dx
\mathrel{}\middle|\mathrel{}
v\in H^1(\mathbb{R}^N),~\|v\|_{L^2}=1\,\right\}.
\end{equation}
Then the set of ground states
\[
\mathcal{G}_\omega
\mathrel{\mathop:}=\{\,\phi\in\mathcal{F}_\omega\mid S_\omega(\phi)\le S_\omega(v)~\text{for all $v\in\mathcal{F}_\omega$}\,\}
\]
is not empty,
where
\[
\mathcal{F}_\omega
\mathrel{\mathop:}=\{\,\phi\in H^1(\mathbb{R}^N)\setminus\{0\}\mid S_\omega'(\phi)=0\,\}
\]
is the set of all nontrivial solutions of \eqref{sp}.
Moreover, if $\phi\in\mathcal{G}_\omega$, then
\begin{equation}
S_\omega(\phi)
=\inf\{\,S_\omega(v)\mid v\in H^1(\mathbb{R}^N)\setminus\{0\},~
K_\omega(v)=0\,\}.
\end{equation}
\end{proposition}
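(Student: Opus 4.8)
The plan is to obtain $\phi$ by constrained minimization and then read off the variational characterization. It is convenient to put $\|v\|_\omega^2:=\|\nabla v\|_{L^2}^2+\omega\|v\|_{L^2}^2-\gamma\int_{\mathbb R^N}|x|^{-\alpha}|v(x)|^2\,dx$, so that $S_\omega(v)=\tfrac12\|v\|_\omega^2-\tfrac1{p+1}\|v\|_{L^{p+1}}^{p+1}$ and $K_\omega(v)=\|v\|_\omega^2-\|v\|_{L^{p+1}}^{p+1}$. First I record a preliminary fact: since $0<\alpha<2$, splitting $\int|x|^{-\alpha}|v|^2$ over $\{|x|<1\}$ and $\{|x|\ge1\}$ and using H\"older together with the Gagliardo--Nirenberg inequality on the first piece shows that $v\mapsto\int|x|^{-\alpha}|v|^2$ is an infinitesimally form-small perturbation of $-\Delta$; combined with the definition of $\omega_0$, this yields, for $\omega>\omega_0$, that $\|\cdot\|_\omega$ is a norm on $H^1(\mathbb R^N)$ equivalent to $\|\cdot\|_{H^1}$ (this is already implicit in \cite{FO-s}). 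In particular $S_\omega,K_\omega\in C^1$, and for every $w\in H^1\setminus\{0\}$ there is a unique $t(w)=(\|w\|_\omega^2/\|w\|_{L^{p+1}}^{p+1})^{1/(p-1)}>0$ with $t(w)w\in\mathcal N_\omega:=\{v\in H^1\setminus\{0\}:K_\omega(v)=0\}$, and $t(w)$ is the unique maximizer of $s\mapsto S_\omega(sw)$ on $(0,\infty)$; thus $\mathcal N_\omega\neq\emptyset$.

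Next I reduce everything to the scale-invariant minimization $m(\omega):=\inf\{\|v\|_\omega^2:v\in H^1,\ \|v\|_{L^{p+1}}=1\}$. Using $K_\omega(w)=0$ one finds $S_\omega(w)=\tfrac{p-1}{2(p+1)}\|w\|_\omega^2$ for $w\in\mathcal N_\omega$, and writing $w=s\hat w$ with $\|\hat w\|_{L^{p+1}}=1$ gives
\[
d(\omega):=\inf_{w\in\mathcal N_\omega}S_\omega(w)=\frac{p-1}{2(p+1)}\,m(\omega)^{(p+1)/(p-1)},
\]
the infimum being attained exactly at the positive rescalings of the minimizers of $m(\omega)$. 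By the norm equivalence and Sobolev's inequality, $m(\omega)>0$, hence $d(\omega)>0$.

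The core step is that $m(\omega)$ is attained, and the only genuine difficulty here is the lack of compactness of $H^1\hookrightarrow L^{p+1}$. I would remove it by symmetrization: given a minimizing sequence $(w_n)$, replace $w_n$ by $|w_n|$ and then by its Schwarz symmetrization $w_n^\ast$; this keeps $\|w_n^\ast\|_{L^{p+1}}=1$ and does not increase $\|\cdot\|_\omega^2$, since $\|\nabla w_n^\ast\|_{L^2}\le\|\nabla w_n\|_{L^2}$, $\|w_n^\ast\|_{L^2}=\|w_n\|_{L^2}$, and the Hardy--Littlewood inequality applied to the radially decreasing weight $\gamma|x|^{-\alpha}$ gives $\int|x|^{-\alpha}|w_n|^2\le\int|x|^{-\alpha}|w_n^\ast|^2$, so the attractive ($\gamma>0$) potential term only decreases. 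We may thus assume the $w_n$ are radially symmetric and nonincreasing and bounded in $H^1$; such functions obey a uniform decay bound at infinity (Strauss' radial lemma if $N\ge2$, an elementary monotonicity estimate if $N=1$), so $(w_n)$ is tight and, after a subsequence, $w_n\to w$ strongly in $L^{p+1}$ and weakly in $H^1$, whence $\|w\|_{L^{p+1}}=1$ and $w\neq0$. Moreover $\int|x|^{-\alpha}|w_n|^2\to\int|x|^{-\alpha}|w|^2$: the tails $\int_{|x|>R}|x|^{-\alpha}|w_n|^2\le R^{-\alpha}\|w_n\|_{L^2}^2$ are uniformly small, while on $\{|x|\le R\}$ one uses the Rellich--Kondrachov theorem and H\"older against $|x|^{-\alpha}\in L^r(\{|x|\le R\})$ with some $r\in[N/2,N/\alpha)$. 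Together with the weak lower semicontinuity of $\|\nabla\cdot\|_{L^2}^2$ and $\|\cdot\|_{L^2}^2$, this gives $\|w\|_\omega^2\le\liminf_n\|w_n\|_\omega^2=m(\omega)$, so $w$ is a minimizer.

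Finally I upgrade $w$ to a ground state. The Lagrange multiplier rule at the constrained minimizer $w$ gives $-\Delta w+\omega w-\gamma|x|^{-\alpha}w=\mu|w|^{p-1}w$ for some $\mu\in\mathbb R$; pairing with $w$ gives $2\|w\|_\omega^2=\mu(p+1)\|w\|_{L^{p+1}}^{p+1}=\mu(p+1)$, so $\mu>0$, and then $\phi:=\tau w$ with $\tau:=(\mu(p+1)/2)^{1/(p-1)}=m(\omega)^{1/(p-1)}$ solves \eqref{sp}, i.e.\ $\phi\in\mathcal F_\omega$. Being a positive rescaling of a minimizer of $m(\omega)$, $\phi$ satisfies $S_\omega(\phi)=d(\omega)$ by the reduction above. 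On the other hand every $\psi\in\mathcal F_\omega$ has $K_\omega(\psi)=\langle S_\omega'(\psi),\psi\rangle=0$, so $\psi\in\mathcal N_\omega$ and $S_\omega(\psi)\ge d(\omega)=S_\omega(\phi)$; hence $\phi\in\mathcal G_\omega$ and $\mathcal G_\omega\neq\emptyset$. For any $\phi\in\mathcal G_\omega$ the same inclusion $\phi\in\mathcal N_\omega$ gives $S_\omega(\phi)\ge d(\omega)$, whereas the ground-state property compared with the element just constructed forces $S_\omega(\phi)\le d(\omega)$; thus $S_\omega(\phi)=d(\omega)=\inf\{S_\omega(v):v\in H^1\setminus\{0\},\ K_\omega(v)=0\}$, which is the asserted identity. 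The sole obstacle is therefore the compactness in the third paragraph, resolved by symmetrization precisely because the potential is radial and attractive ($\gamma>0$) and $\alpha<2$ makes the singular term a compact lower-order perturbation; everything else is routine.
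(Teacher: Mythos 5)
Your argument is correct, but it resolves the one genuinely delicate point --- the failure of compactness of $H^1(\mathbb{R}^N)\hookrightarrow L^{p+1}(\mathbb{R}^N)$ --- by a different mechanism than the paper. The paper minimizes $S_\omega$ directly on the Nehari manifold and recovers compactness of a general minimizing sequence via the Brezis--Lieb lemma together with the strict inequality $d(\omega)<d^0(\omega)$ of Lemma~\ref{lem:do}, where $d^0(\omega)$ is the minimization level of the potential-free problem; the weak continuity of the potential energy $G$ then forces the weak limit to be nontrivial, and the strict comparison with the ``problem at infinity'' rules out vanishing and dichotomy. This is a concentration--compactness style argument whose only structural inputs are $G\ge0$, the weak continuity of $G$, and the equivalence \eqref{equi-norm}, so it transfers verbatim to non-radial attractive perturbations. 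You instead pass to the equivalent scale-invariant problem $m(\omega)$ and symmetrize: the diamagnetic and P\'olya--Szeg\H{o} inequalities together with Hardy--Littlewood (which has the favorable sign precisely because $\gamma>0$ and $|x|^{-\alpha}$ is symmetric decreasing) let you assume the minimizing sequence is radially decreasing, after which Strauss-type decay gives strong $L^{p+1}$ convergence and the local compactness of the singular term finishes the job. Your route is more elementary and self-contained, and as a bonus produces a nonnegative radial ground state; its cost is that it genuinely uses the radial monotone structure of the potential and would not extend to the non-radial setting the paper's argument covers. The remaining steps --- the Lagrange multiplier computation, the rescaling $\phi=m(\omega)^{1/(p-1)}w$ solving \eqref{sp}, the identity $S_\omega(\phi)=d(\omega)$, and the two-sided comparison showing every element of $\mathcal{G}_\omega$ attains $d(\omega)$ --- match the paper's Lemmas~2.1--2.2 in content and are all correct. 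Two points worth making explicit if you write this up: the weak lower semicontinuity of $\omega\|\cdot\|_{L^2}^2$ uses $\omega>\omega_0>0$ (which holds by the scaling $\lambda^2\|\nabla v\|_{L^2}^2-\gamma\lambda^\alpha\int|x|^{-\alpha}|v|^2$ with $\alpha<2$, forcing the infimum in \eqref{omega} to be negative), and the Hardy--Littlewood step should be justified by the layer-cake formula since $|x|^{-\alpha}$ lies in no global Lebesgue space.
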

For the sake of completeness,
we give a proof of Proposition~\ref{prop:GS} in Section~\ref{sec:GS} by using the argument in \cite[Section~3]{FOO08}.

In the present paper,
we study the strong instability of the standing wave solution $e^{i\omega t}\phi_\omega$ of \eqref{nls},
where $\omega>\omega_0$ and $\phi_\omega\in\mathcal{G}_\omega$.
We recall the definitions of stability and instability of standing waves.

\begin{definition}
Let $e^{i\omega t}\phi$ be a standing wave solution of \eqref{nls}.
\begin{itemize}
\item
We say that $e^{i\omega t}\phi$ is \textit{stable} if for each $\varepsilon>0$, 
there exists $\delta>0$ such that if $u_0\in H^1(\mathbb{R}^N)$ satisfies 
$\|u_0-\phi\|_{H^1}<\delta$,
then the solution $u(t)$ of \eqref{nls} with $u(0)=u_0$ exists globally in time,
and satisfies
\[
\sup_{t\ge0}\inf_{\theta\in\mathbb{R}}\|u(t)-e^{i\theta}\phi\|_{H^1}<\varepsilon.
\]
\item
We say that $e^{i\omega t}\phi$ is \textit{unstable} if $e^{i\omega t}\phi$ is not stable.
\item
We say that $e^{i\omega t}\phi$ is \textit{strongly unstable} if for each $\varepsilon>0$, there exists $u_0\in H^1(\mathbb{R}^N)$ such that $\|u_0-\phi\|_{H^1}<\varepsilon$
and the solution $u(t)$ of \eqref{nls} with $u(0)=u_0$ blows up in finite time.
\end{itemize}
\end{definition}

Here, we state some known results related to our works.
The stability and instability of standing waves with a ground state profile for nonlinear Schr\"odinger equations have been studied by many researchers.
For \eqref{nls} in the nonpotential case $\gamma=0$,
Berestycki and Cazenave \cite{BC81} proved the strong instability for any $\omega>0$ when $1+4/N\le p<1+4/(N-2)$
(for the case $p=1+4/N$, see also \cite{Weinstein83}).
Cazenave and Lions~\cite{CL82} proved the stability for any $\omega>0$ if $1<p<1+4/N$.
For abstract Hamiltonian systems including nonlinear Schr\"odinger equations,
Grillakis, Shatah, and Strauss \cite{GSS87, GSS90} gave sufficient conditions for the stability and instability,
that is,
if $\partial_\omega\|\phi_\omega\|_{L^2}^2>0$, the standing wave is stable,
and if $\partial_\omega\|\phi_\omega\|_{L^2}^2<0$,
the standing wave is unstable (see also \cite{Shatah83, SS85, Weinstein86}).
For the nonlinear Schr\"odinger equation with a general potential
\begin{equation}\label{nlsp}
i\partial_tu
=-\Delta u
+\tilde V(x)u
-|u|^{p-1}u,\quad
(t,x)\in\mathbb{R}\times\mathbb{R}^N,
\end{equation}
Rose and Weinstein~\cite{RW88} proved the stability for $\omega>\tilde\omega_0$ sufficiently closed to $\tilde\omega_0$ even when $1+4/N\le p<1+4/(N-2)$ by using the criteria of Grillakis, Shatah, and Strauss~\cite{GSS87},
where $-\tilde\omega_0$ is the smallest eigenvalue of the Schr\"odinger operator $-\Delta+\tilde V$.
In \cite{FO-s},
Ohta and Fukuizumi improved the stability results of Rose and Weinstein,
and in \cite{FO-i},
they proved the instability for sufficiently large $\omega$ when $1+4/N<p<1+4/(N-2)$ by using the sufficient condition of Ohta~\cite{Ohta95},
that is,
if $\partial_\lambda^2\tilde S_\omega(\phi_\omega^\lambda)|_{\lambda=1}<0$, the standing wave is unstable,
where 
$\tilde S_\omega$ is the action corresponding to \eqref{nlsp}, 
and $v^\lambda(x)\mathrel{\mathop:}=\lambda^{N/2}v(\lambda x)$ is the scaling, 
which does not change the $L^2$-norm 
(see also \cite{FOO08,GHP04} in the Dirac delta potential case and \cite{Fukuizumi01} in the harmonic potential case).
For the nonlinear Schr\"odinger equation with an attractive Dirac delta potential
\begin{equation}\label{nlsd}
i\partial_tu
=-\partial_x^2u
-\tilde\gamma\delta(x)u
-|u|^{p-1}u,\quad
(t,x)\in\mathbb{R}\times\mathbb{R},
\end{equation}
Ohta and Yamaguchi~\cite{OY16} proved the strong instability of the standing wave with positive energy $\tilde E(\phi_\omega)>0$ when $\tilde\gamma>0$ and $p>5$,
and as a corollary,
they proved the strong instability for sufficiently large $\omega$
(see also \cite{OY15} for related works).
Recently,
for the nonlinear Schr\"odinger equation with a harmonic potential
\begin{equation}\label{nlsh}
i\partial_tu
=-\Delta u
+|x|^2u
-|u|^{p-1}u,\quad
(t,x)\in\mathbb{R}\times\mathbb{R}^N,
\end{equation}
Ohta~\cite{Ohtapre} proved the strong instability under the same assumption $\partial_\lambda^2\tilde S_\omega(\phi_\omega^\lambda)|_{\lambda=1}\le0$ as in \cite{Ohta95} when $1+4/N<p<1+4/(N-2)$.

In view of the graph of $\lambda\mapsto \tilde S_\omega(\phi_\omega^\lambda)$,
we see that $\tilde E(\phi_\omega)>0$ implies $\partial_\lambda^2 \tilde S_\omega(\phi_\omega^\lambda)|_{\lambda=1}<0$.
Therefore, the question naturally arises whether the standing wave is strongly unstable or not in the case $\tilde E(\phi_\omega)\le0$ and $\partial_\lambda^2\tilde S_\omega(\phi_\omega^\lambda)|_{\lambda=1}\le0$
for \eqref{nlsd}.
However, the proof for \eqref{nlsh} in \cite{Ohtapre} is not applicable to \eqref{nlsd}.

In this paper,
we consider the strong instability of standing waves under the same assumption as in \cite{Ohtapre}.
In order to treat more general potentials with suitable properties related to the scaling $\lambda\mapsto v^\lambda$,
we study the nonlinear Schr\"odinger equation \eqref{nls} with an inverse power potential.
Now, we state our main result.

\begin{theorem}\label{mainthm}
Assume
\eqref{asmp},
$\omega>\omega_0$,
and that $\phi_\omega\in\mathcal{G}_\omega$ satisfies 
$\partial_\lambda^2S_\omega(\phi_\omega^\lambda)|_{\lambda=1}\le0$,
where $\phi_\omega^\lambda(x)=\lambda^{N/2}\phi_\omega(\lambda x)$.
Then the standing wave solution $e^{i\omega t}\phi_\omega$ of \eqref{nls} is strongly unstable.
\end{theorem}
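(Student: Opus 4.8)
The plan is to combine the classical Berestycki--Cazenave blow-up mechanism with the variational characterization in Proposition~\ref{prop:GS}, following the strategy of Ohta~\cite{Ohtapre} but replacing the virial identity used for the harmonic oscillator by one adapted to the inverse power potential and its scaling behaviour. The starting point is to analyse the function $\lambda\mapsto S_\omega(\phi_\omega^\lambda)$. Writing out the three terms, one has $\|\nabla\phi_\omega^\lambda\|_{L^2}^2=\lambda^2\|\nabla\phi_\omega\|_{L^2}^2$, $\|\phi_\omega^\lambda\|_{L^2}^2=\|\phi_\omega\|_{L^2}^2$, $\int|x|^{-\alpha}|\phi_\omega^\lambda|^2\,dx=\lambda^\alpha\int|x|^{-\alpha}|\phi_\omega|^2\,dx$, and $\|\phi_\omega^\lambda\|_{L^{p+1}}^{p+1}=\lambda^{N(p-1)/2}\|\phi_\omega\|_{L^{p+1}}^{p+1}$; since $p>1+4/N$ we have $N(p-1)/2>2>\alpha$, so the map $f(\lambda):=S_\omega(\phi_\omega^\lambda)$ has a strict local maximum at $\lambda=1$ (because $K_\omega(\phi_\omega)=f'(1)=0$ and the ground state is a mountain-pass/Nehari minimizer), $f(\lambda)\to-\infty$ as $\lambda\to\infty$, and the assumption $f''(1)\le 0$ together with the explicit powers forces $f'(\lambda)<0$ for all $\lambda>1$. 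From this I extract the key inequality: for every $\lambda>1$,
\[
S_\omega(\phi_\omega^\lambda)<S_\omega(\phi_\omega)
\quad\text{and}\quad
K_\omega(\phi_\omega^\lambda)=\lambda f'(\lambda)<0.
\]

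Next I set up the invariant set. For $\lambda>1$ close to $1$, take $u_0=\phi_\omega^\lambda$; then $\|u_0-\phi_\omega\|_{H^1}<\varepsilon$, and by the above $S_\omega(u_0)<S_\omega(\phi_\omega)$ and $K_\omega(u_0)<0$. Define
\[
\mathcal{B}:=\{\,v\in H^1(\mathbb{R}^N)\mid S_\omega(v)<S_\omega(\phi_\omega),\ K_\omega(v)<0\,\}.
\]
Using conservation of $E$ and of $\|u(t)\|_{L^2}$, hence of $S_\omega(u(t))$, together with the characterization $S_\omega(\phi_\omega)=\inf\{S_\omega(v):K_\omega(v)=0,\ v\neq0\}$ from Proposition~\ref{prop:GS}, a standard continuity argument shows $\mathcal{B}$ is invariant under the flow of \eqref{nls}: $K_\omega(u(t))$ cannot vanish, so it stays negative. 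Moreover, on $\mathcal{B}$ one gets a uniform coercive lower bound $-K_\omega(v)\ge\delta_0>0$; indeed, if $K_\omega(v)<0$ one can find $\mu\in(0,1)$ with $K_\omega(\mu v)=0$ (scaling down the whole function, using $K_\omega(\mu v)=\mu^2(\cdots)-\mu^{p+1}\|v\|_{p+1}^{p+1}$ and $p>1$), whence $S_\omega(\mu v)\ge S_\omega(\phi_\omega)$, and expanding $S_\omega(v)-S_\omega(\mu v)$ in the variable $\mu$ yields an estimate of the form $S_\omega(\phi_\omega)-S_\omega(v)\le c\,K_\omega(v)$ for a positive constant $c$; combined with $S_\omega(v)<S_\omega(\phi_\omega)$ this gives $K_\omega(v)\le -\tfrac1c\big(S_\omega(\phi_\omega)-S_\omega(v)\big)<0$ with the right-hand side conserved along the flow.

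The final ingredient is a virial/localized-variance argument to convert $K_\omega(u(t))\le-\delta_0$ into finite-time blow-up. Formally, if $\Sigma:=\int|x|^2|u(t)|^2\,dx<\infty$ then $\Sigma''(t)=8\|\nabla u\|_{L^2}^2-4\gamma\alpha\int|x|^{-\alpha}|u|^2\,dx-\tfrac{4N(p-1)}{p+1}\|u\|_{p+1}^{p+1}$, which one checks equals $c_1 K_\omega(u(t))+c_2\big(\text{terms controlled by }K_\omega\big)$ with the correct signs precisely because $\alpha<2<N(p-1)/2$; more cleanly, $\Sigma''(t)\le 8\,\partial_\lambda^2 S_\omega(u(t)^\lambda)|_{\lambda=1}$-type bound shows $\Sigma''(t)\le -c\,\delta_0<0$, forcing $\Sigma$ to reach $0$ in finite time, a contradiction with $\Sigma\ge0$, hence blow-up. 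To avoid the usual finite-variance restriction on $u_0$ (not guaranteed near $\phi_\omega$), I will instead use Ohta's truncated virial identity with a cutoff weight $\varphi_R(x)\approx|x|^2$ for $|x|\le R$: differentiating $\int\varphi_R|u|^2$ twice, one gets the same leading terms plus error terms supported in $|x|\ge R$ that are $o_R(1)$ uniformly in $t$ by the uniform $H^1$ bound coming from conservation laws on $\mathcal{B}$, so choosing $R$ large gives $\frac{d^2}{dt^2}\int\varphi_R|u|^2\le -\tfrac{c\delta_0}{2}<0$ and the same contradiction. The main obstacle is this last step: one must check that the singular potential term $\gamma\alpha\int|x|^{-\alpha}|u|^2$ and the localization errors are genuinely dominated — the first is fine because $0<\alpha<2$ makes its virial coefficient strictly smaller than the Laplacian's, and the inverse-power singularity at the origin is integrable against $H^1$ functions and is actually helped, not hurt, by the localization (the cutoff only modifies large $|x|$); assembling these estimates into a clean differential inequality independent of $u_0\in\mathcal{B}$ is the technical heart of the argument.
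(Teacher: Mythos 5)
There is a genuine gap, and it sits exactly at the point you defer to at the end. The invariance and coercivity you establish concern the Nehari functional $K_\omega$, but the virial identity controls the different functional $Q(v)=\partial_\lambda S_\omega(v^\lambda)|_{\lambda=1}=\|\nabla v\|_{L^2}^2-\tfrac{\alpha}{2}G(v)-\tfrac{\beta}{p+1}\|v\|_{L^{p+1}}^{p+1}$ with $\beta=N(p-1)/2$, and negativity of $K_\omega$ does not transfer to $Q$. Indeed
\[
Q(v)-K_\omega(v)=-\omega\|v\|_{L^2}^2+\Bigl(1-\frac{\alpha}{2}\Bigr)G(v)+\Bigl(1-\frac{\beta}{p+1}\Bigr)\|v\|_{L^{p+1}}^{p+1},
\]
and since $0<\alpha<2$ and $\beta/(p+1)<1$ throughout the admissible range of $p$, the last two terms are positive and uncontrolled; no positive combination $c_1K_\omega+c_2(\cdots)$ with ``the correct signs'' exists. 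Obtaining a uniform negative upper bound on $Q(u(t))$ is precisely the content of the paper's Lemma~\ref{key-lemma}, which proves $\tfrac12 Q(v)\le S_\omega(v)-S_\omega(\phi_\omega)$. That proof needs ingredients your set $\mathcal{B}=\{S_\omega<S_\omega(\phi_\omega),\,K_\omega<0\}$ does not record: the constraints $\|v\|_{L^2}\le\|\phi_\omega\|_{L^2}$ and $\|v\|_{L^{p+1}}\ge\|\phi_\omega\|_{L^{p+1}}$ (used, via the rescaling $v^{\lambda_0}$ normalizing the $L^{p+1}$ norm and Lemma~\ref{vari-ineq}, to bound the potential term $G(v)$ from above), and the hypothesis $\partial_\lambda^2S_\omega(\phi_\omega^\lambda)|_{\lambda=1}\le0$ itself (used to bound $\omega\|v\|_{L^2}^2$). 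Without this step your argument proves only that $K_\omega(u(t))$ stays negative, which does not yield blow-up.

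The second problem is your localized virial argument. You claim the truncation errors are $o_R(1)$ ``uniformly in $t$ by the uniform $H^1$ bound coming from conservation laws on $\mathcal{B}$,'' but there is no such bound: the solutions you are constructing blow up, so $\|\nabla u(t)\|_{L^2}\to\infty$, and the error terms $\int_{|x|\ge R}|u|^{p+1}$ cannot be absorbed this way (truncated virial arguments in this setting require radial symmetry or additional decay). The paper avoids the issue entirely: it uses the exact identity $\tfrac{d^2}{dt^2}\|xu(t)\|_{L^2}^2=8Q(u(t))$ for data in $\Sigma$, and approximates $\phi_\omega$ in $H^1$ by $\chi_{M_0}\phi_\omega^{\lambda_0}\in\mathcal{B}_\omega\cap\Sigma$ with a compactly supported cutoff $\chi_{M_0}$; the cutoff preserves $\|v\|_{L^2}\le\|\phi_\omega\|_{L^2}$ and the open conditions defining $\mathcal{B}_\omega$, so the finite-variance restriction costs nothing. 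Your analysis of $\lambda\mapsto S_\omega(\phi_\omega^\lambda)$ (monotonicity for $\lambda>1$ from $f'(1)=0$, $f''(1)\le0$, and the ordering of the exponents) is correct and matches the paper's Lemma~\ref{pib}, but the two issues above mean the blow-up mechanism is not established.
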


It is proven in \cite[Section~2]{FO-i} that the assumption $\partial_\lambda^2S_\omega(\phi_\omega^\lambda)|_{\lambda=1}\le0$ is satisfied for sufficiently large $\omega$.
Therefore, we have the following corollary.

\begin{corollary}
Assume
\eqref{asmp}.
Then there exists $\omega_1>\omega_0$ such that if $\omega\ge\omega_1$ and $\phi_\omega\in\mathcal{G}_\omega$,
the standing wave solution $e^{i\omega t}\phi_\omega$ of \eqref{nls} is strongly unstable.
\end{corollary}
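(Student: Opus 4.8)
The plan is to derive the corollary directly from Theorem~\ref{mainthm} by showing that the single hypothesis feeding that theorem, namely $\partial_\lambda^2 S_\omega(\phi_\omega^\lambda)|_{\lambda=1}\le 0$, holds for every sufficiently large $\omega$. Since Theorem~\ref{mainthm} already upgrades this sign condition into strong instability for any fixed $\omega>\omega_0$ and any $\phi_\omega\in\mathcal{G}_\omega$, the whole task collapses to producing a threshold $\omega_1>\omega_0$ beyond which the second variation of the action along the $L^2$-invariant scaling is nonpositive, uniformly over the (possibly non-unique) ground states.

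First I would make the scaling profile explicit. Writing $\phi=\phi_\omega$ and using $\|\nabla\phi^\lambda\|_{L^2}^2=\lambda^2\|\nabla\phi\|_{L^2}^2$, $\int_{\mathbb{R}^N}|\phi^\lambda|^2|x|^{-\alpha}\,dx=\lambda^\alpha\int_{\mathbb{R}^N}|\phi|^2|x|^{-\alpha}\,dx$, and $\|\phi^\lambda\|_{L^{p+1}}^{p+1}=\lambda^{N(p-1)/2}\|\phi\|_{L^{p+1}}^{p+1}$, the map $\lambda\mapsto S_\omega(\phi^\lambda)$ is a fixed combination of the powers $\lambda^2$, $\lambda^\alpha$, and $\lambda^\beta$ with $\beta:=N(p-1)/2$. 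Because $\phi$ is a critical point of $S_\omega$ and the scaling preserves the $L^2$-norm, the first-order identity $\partial_\lambda S_\omega(\phi^\lambda)|_{\lambda=1}=0$ holds; using it to eliminate the gradient term, the second derivative simplifies to
\[
\partial_\lambda^2 S_\omega(\phi_\omega^\lambda)|_{\lambda=1}
=\frac{\gamma\alpha(2-\alpha)}{2}\int_{\mathbb{R}^N}\frac{|\phi_\omega|^2}{|x|^\alpha}\,dx
-\frac{\beta(\beta-2)}{p+1}\|\phi_\omega\|_{L^{p+1}}^{p+1}.
\]
The hypothesis $p>1+4/N$ forces $\beta>2$, so the nonlinear term carries a strictly negative coefficient, while $\alpha<2$ makes the potential term strictly positive; the sign of the whole expression is therefore dictated by the relative growth of the two integrals as $\omega\to\infty$.

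Next I would extract that growth by rescaling. Setting $\phi_\omega(x)=\omega^{1/(p-1)}\psi_\omega(\sqrt{\omega}\,x)$ turns \eqref{sp} into the normalized equation $-\Delta\psi_\omega+\psi_\omega-\gamma\omega^{\alpha/2-1}|y|^{-\alpha}\psi_\omega-|\psi_\omega|^{p-1}\psi_\omega=0$, whose potential coefficient $\gamma\omega^{\alpha/2-1}$ vanishes as $\omega\to\infty$ since $\alpha<2$. Substituting the same scaling into the two integrals shows that $\int_{\mathbb{R}^N}|\phi_\omega|^2|x|^{-\alpha}\,dx$ carries the factor $\omega^{2/(p-1)+\alpha/2-N/2}$ and $\|\phi_\omega\|_{L^{p+1}}^{p+1}$ the factor $\omega^{(p+1)/(p-1)-N/2}$, the latter exponent exceeding the former by exactly $1-\alpha/2>0$. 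Factoring out the common lower power, the bracketed difference then has the shape (bounded positive quantity) $-\,\omega^{1-\alpha/2}\cdot$(bounded positive quantity), which is strictly negative once $\omega$ is large. Choosing $\omega_1>\omega_0$ to be any threshold past which this holds yields $\partial_\lambda^2 S_\omega(\phi_\omega^\lambda)|_{\lambda=1}<0\le0$ for all $\omega\ge\omega_1$, whereupon Theorem~\ref{mainthm} completes the proof.

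The hard part will be justifying that the rescaled profiles stay genuinely nondegenerate as $\omega\to\infty$: one needs $\psi_\omega$ to converge (in $H^1(\mathbb{R}^N)$, together with the weighted integral $\int_{\mathbb{R}^N}|\psi_\omega|^2|y|^{-\alpha}\,dy$, whose integrability is guaranteed by $\alpha<N$) to the limiting ground state $Q$ of $-\Delta Q+Q-|Q|^{p-1}Q=0$, so that both prefactors are bounded above and away from zero and the explicit power $\omega^{1-\alpha/2}$ truly forces the negative term to dominate. This uniform concentration-compactness control of the ground states $\phi_\omega$ for large frequency is exactly what is established in \cite[Section~2]{FO-i}, which I would invoke to secure the required bounds rather than reprove them here.
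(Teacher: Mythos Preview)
Your proposal is correct and takes essentially the same approach as the paper: the paper simply remarks that the condition $\partial_\lambda^2 S_\omega(\phi_\omega^\lambda)|_{\lambda=1}\le 0$ is established for sufficiently large $\omega$ in \cite[Section~2]{FO-i}, and then reads the corollary off from Theorem~\ref{mainthm}. Your write-up is more explicit about the rescaling mechanism behind that citation (and your simplified formula for the second variation, obtained by eliminating $\|\nabla\phi_\omega\|_{L^2}^2$ via $Q(\phi_\omega)=0$, is correct), but the route is the same.
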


\begin{remark}
Theorem~\ref{mainthm} can be extended to more general settings.
The important feature used in the proof of Theorem~\ref{mainthm} is that the energy satisfies 
\begin{gather} \label{eq:gn1}
E(v)=\frac12\|\nabla v\|_{L^2}^2
-\frac12G(v)
-\frac{1}{p+1}\|v\|_{L^{p+1}}^{p+1},\\ \label{eq:gn3}
G(v)\ge0,\quad
G(\lambda v)
=\lambda^2G(v),\quad
G(v^\lambda)
=\lambda^\alpha G(v),\quad
\|v^\lambda\|_{L^{p+1}}^{p+1}
=\lambda^\beta\|v\|_{L^{p+1}}^{p+1}
\end{gather}
with $\beta>2>\alpha>0$.
Since the energy of \eqref{nlsd} satisfies \eqref{eq:gn1} and \eqref{eq:gn3} with 
$G(v)=\gamma|v(0)|^2$,
$\alpha=1$, and
$\beta=(p-1)/2$,
the proof is applicable to \eqref{nlsd} for $p>5$.
This gives an improvement of the result of Ohta and Yamaguchi~\cite{OY16}.
\end{remark}


The proof of blowup for nonlinear Schr\"odinger equations relies on the virial identity
\begin{equation}\label{virial}
\frac{d^2}{dt^2}\|xu(t)\|_{L^2}^2
=8Q(u(t)),
\end{equation}
where $Q$ is the functional on $H^1(\mathbb{R}^N)$ defined by
\begin{align*}
Q(v)
=\|\nabla v\|_{L^2}^2
-\frac{\gamma\alpha}{2}\int_{\mathbb{R}^N}\frac{|v(x)|^2}{|x|^\alpha}\,dx
-\frac{N(p-1)}{2(p+1)}\|v\|_{L^{p+1}}^{p+1}.
\end{align*}
Note that
\begin{align*}
S_\omega(v^\lambda)
&=\frac{\lambda^2}{2}\|\nabla v\|_{L^2}^2
+\frac{\omega}{2}\|v\|_{L^2}^2
-\frac{\gamma\lambda^\alpha}{2}\int_{\mathbb{R}^N}\frac{|v(x)|^2}{|x|^\alpha}\,dx
-\frac{\lambda^{N(p-1)/2}}{p+1}\|v\|_{L^{p+1}}^{p+1}, \\
Q(v)
&=\partial_{\lambda}S_\omega(v^\lambda)|_{\lambda=1}.
\end{align*}
Since
$x\cdot\nabla V(x)=\gamma\alpha|x|^{-\alpha}\in(L^{q}+L^\infty)(\mathbb{R}^N)$
for some $q>\min\{1,N/2\}$ under the assumption~\eqref{asmp},
from the standard theory \cite[Proposition~6.5.1]{Cazenave},
we obtain the local well-posedness of the Cauchy problem for \eqref{nls} in the weighted space
\[
\Sigma
\mathrel{\mathop:}=\{\,v\in H^1(\mathbb{R}^N)\mid \|xv\|_{L^2}<\infty\,\},
\]
and the virial identity~\eqref{virial}
holds for all $t\in I_{\max}$.

To prove Theorem~\ref{mainthm},
we introduce the set
\begin{align*}
\mathcal{B}_\omega
=\left\{\,v\in H^1(\mathbb{R}^N)
\mathrel{}\middle|\mathrel{}
\begin{gathered}
S_\omega(v)<S_\omega(\phi_\omega),~\|v\|_{L^2}\le\|\phi_\omega\|_{L^2}, \\
\|v\|_{L^{p+1}}>\|\phi_\omega\|_{L^{p+1}},~
Q(v)<0
\end{gathered}\,\right\}.
\end{align*}
Then we have the following blowup result.

\begin{theorem}\label{blowup}
Assume
\eqref{asmp},
$\omega>\omega_0$,
and that $\phi_\omega\in\mathcal{G}_\omega$ satisfies 
$\partial_\lambda^2S_\omega(\phi_\omega^\lambda)|_{\lambda=1}\le0$.
If $u_0\in\mathcal{B}_\omega\cap\Sigma$,
then the solution $u(t)$ of \eqref{nls} with $u(0)=u_0$ blows up in finite time.
\end{theorem}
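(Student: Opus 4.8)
The plan is to show that $\mathcal{B}_\omega \cap \Sigma$ is invariant under the flow of \eqref{nls}, and that on $\mathcal{B}_\omega$ the virial quantity $Q(u(t))$ is bounded above by a negative constant, so that \eqref{virial} forces $\|xu(t)\|_{L^2}^2$ to become negative in finite time unless the solution has already blown up. The first task is to understand the scaling function $f(\lambda) := S_\omega(\phi_\omega^\lambda)$. Using the displayed formula for $S_\omega(v^\lambda)$ with $v = \phi_\omega$, one has $f(\lambda) = \tfrac{\lambda^2}{2}\|\nabla\phi_\omega\|_{L^2}^2 + \tfrac{\omega}{2}\|\phi_\omega\|_{L^2}^2 - \tfrac{\gamma\lambda^\alpha}{2}\int |\phi_\omega|^2|x|^{-\alpha} - \tfrac{\lambda^{N(p-1)/2}}{p+1}\|\phi_\omega\|_{L^{p+1}}^{p+1}$. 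Since $K_\omega(\phi_\omega) = f'(1) = 0$ and the hypothesis gives $f''(1) \le 0$, and since $N(p-1)/2 > 2 > \alpha > 0$ under \eqref{asmp}, a convexity/shape analysis of $f$ shows that $f$ is strictly decreasing on $[1,\infty)$ with $f'(\lambda) < 0 = f'(1)$ for $\lambda > 1$; here $f'(\lambda) = \lambda^{-1}Q(\phi_\omega^\lambda)$ — more precisely $Q(\phi_\omega^\lambda) = \lambda f'(\lambda)$ from $Q(v) = \partial_\lambda S_\omega(v^\lambda)|_{\lambda=1}$ applied at $v = \phi_\omega^\lambda$ — so $Q(\phi_\omega^\lambda) < 0$ for all $\lambda > 1$. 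In particular, for $\lambda$ slightly larger than $1$, $\phi_\omega^\lambda$ lies in $\mathcal{B}_\omega$: indeed $S_\omega(\phi_\omega^\lambda) = f(\lambda) < f(1) = S_\omega(\phi_\omega)$, the $L^2$-norm is unchanged, $Q(\phi_\omega^\lambda) < 0$, and $\|\phi_\omega^\lambda\|_{L^{p+1}}^{p+1} = \lambda^{N(p-1)/2}\|\phi_\omega\|_{L^{p+1}}^{p+1} > \|\phi_\omega\|_{L^{p+1}}^{p+1}$. This will be used for Theorem~\ref{mainthm} (it shows $\mathcal{B}_\omega$ comes arbitrarily close to $\phi_\omega$ in $H^1$), but for Theorem~\ref{blowup} itself we only need the properties of $\mathcal{B}_\omega$.

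Next I would establish invariance of $\mathcal{B}_\omega \cap \Sigma$. The conditions $S_\omega(v) < S_\omega(\phi_\omega)$ and $\|v\|_{L^2} \le \|\phi_\omega\|_{L^2}$ are preserved by the conservation of energy and $L^2$-norm (note $S_\omega$ is a conserved quantity since $E$ and $\|\cdot\|_{L^2}^2$ both are). Membership in $\Sigma$ is preserved by the local well-posedness in $\Sigma$. The crux is to show $Q(u(t)) < 0$ and $\|u(t)\|_{L^{p+1}} > \|\phi_\omega\|_{L^{p+1}}$ persist. The standard mechanism: suppose for contradiction that at some first time $t_0$ one has $Q(u(t_0)) = 0$ (continuity of $t \mapsto Q(u(t))$). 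I claim that any $v$ with $\|v\|_{L^2} \le \|\phi_\omega\|_{L^2}$, $Q(v) = 0$, and $v \ne 0$ must satisfy $S_\omega(v) \ge S_\omega(\phi_\omega)$, contradicting $S_\omega(u(t_0)) < S_\omega(\phi_\omega)$. To see $u(t_0) \ne 0$: on $\mathcal{B}_\omega$, $\|v\|_{L^{p+1}} > \|\phi_\omega\|_{L^{p+1}} > 0$. To prove the claim, I would relate it to the variational characterization in Proposition~\ref{prop:GS}: one shows that if $Q(v) = 0$ and $\|v\|_{L^2} \le \|\phi_\omega\|_{L^2}$ with $v\ne0$, then either $K_\omega(v) \le 0$ or one can rescale. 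Concretely, consider $g(\lambda) := S_\omega(v^\lambda)$; since $K_\omega(v^\lambda) = \langle S_\omega'(v^\lambda), v^\lambda\rangle$ and $Q(v) = 0$ means $g'(1) = 0$, and the shape of $g$ (same form as $f$ above, with $N(p-1)/2 > 2 > \alpha$) forces $g'(1) = 0$ to be the unique critical point and a strict maximum, so $\lambda = 1$ maximizes $g$ and $g''(1) < 0$. Then using the relation between $Q$, $K_\omega$, and $\partial_\lambda$ one finds $K_\omega(v^{\lambda_0}) = 0$ for some $\lambda_0 \le 1$ (or directly $K_\omega(v) \le 0$ giving a $\lambda_0 \le 1$ with $K_\omega(v^{\lambda_0})=0$ — this requires checking the sign of $K_\omega(v^\lambda)$ for small $\lambda$, which is positive since the quadratic terms dominate). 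By Proposition~\ref{prop:GS}, $S_\omega(v^{\lambda_0}) \ge S_\omega(\phi_\omega)$, and since $S_\omega(v) \ge S_\omega(v^{\lambda_0})$ (as $\lambda=1$ is near the maximum and $\lambda_0\le1$; one checks $g$ is increasing on $(0,1]$), we get $S_\omega(v) \ge S_\omega(\phi_\omega)$. This contradiction shows $Q(u(t)) < 0$ for all $t \in I_{\max}$; the $L^{p+1}$ condition is handled similarly or follows along the way.

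Finally, with $Q(u(t)) < 0$ on all of $I_{\max}$, I would upgrade this to a uniform negative bound. The function $h(t) := S_\omega(\phi_\omega) - S_\omega(u(t)) > 0$ is constant in $t$; one shows $Q(u(t)) \le -c\, h$ for a constant $c > 0$ depending only on $\phi_\omega$, using again the scaling analysis: for any $v$ in the relevant set, $Q(v)$ controls $S_\omega(\phi_\omega) - S_\omega(v)$ from below in absolute value. The cleanest route is: let $\lambda(v) \le 1$ be such that $Q(v^{\lambda(v)}) = 0$ (exists when $Q(v) < 0$), note $Q(v) = \int_1^{\lambda(v)} \partial_\mu Q(v^\mu)\,d\mu$ and estimate; alternatively use the inequality $S_\omega(\phi_\omega) - S_\omega(v) \le S_\omega(v^{\lambda(v)}) - S_\omega(v) \le$ (something)$\cdot|Q(v)|$, exploiting that $N(p-1)/2 > 2$ makes the relevant one-variable estimate uniform. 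Then $\tfrac{d^2}{dt^2}\|xu(t)\|_{L^2}^2 = 8Q(u(t)) \le -8c\,h < 0$, so the nonnegative quantity $\|xu(t)\|_{L^2}^2$ satisfies a differential inequality forcing it to zero in finite time, whence $T^+ < \infty$. The main obstacle I anticipate is the second step — proving that $Q(v) = 0$ together with $\|v\|_{L^2} \le \|\phi_\omega\|_{L^2}$ implies $S_\omega(v) \ge S_\omega(\phi_\omega)$ — because this requires carefully combining the one-dimensional scaling picture for $\lambda \mapsto S_\omega(v^\lambda)$ with the Nehari-manifold characterization of Proposition~\ref{prop:GS}, and in particular tracking the two different scalings ($v \mapsto \mu v$ for Nehari, $v \mapsto v^\lambda$ for the virial) and the role of the constraint $\|v\|_{L^2} \le \|\phi_\omega\|_{L^2}$, which is exactly where the hypothesis $\partial_\lambda^2 S_\omega(\phi_\omega^\lambda)|_{\lambda=1} \le 0$ (rather than $< 0$) must be used to its full strength.
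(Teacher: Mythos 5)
Your overall architecture (flow-invariance of $\mathcal{B}_\omega$, a uniform negative upper bound on $Q(u(t))$, then the virial identity) matches the paper's, and the endgame is fine. The gap is in the central claim of your second step: that $Q(v)=0$, $v\ne 0$, $\|v\|_{L^2}\le\|\phi_\omega\|_{L^2}$ force $S_\omega(v)\ge S_\omega(\phi_\omega)$ via the shape of $g(\lambda)=S_\omega(v^\lambda)$. Writing $g(\lambda)=\frac{\lambda^2}{2}\|\nabla v\|_{L^2}^2+\frac{\omega}{2}\|v\|_{L^2}^2-\frac{\lambda^\alpha}{2}G(v)-\frac{\lambda^\beta}{p+1}\|v\|_{L^{p+1}}^{p+1}$ with $0<\alpha<2<\beta$, one has $\lambda^{-1}g'(\lambda)=\|\nabla v\|_{L^2}^2-\frac{\alpha}{2}\lambda^{\alpha-2}G(v)-\frac{\beta}{p+1}\lambda^{\beta-2}\|v\|_{L^{p+1}}^{p+1}\to-\infty$ as $\lambda\searrow0$ whenever $G(v)>0$. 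So $g$ is \emph{decreasing} near $\lambda=0$; $\lambda=1$ need not be the unique critical point nor a maximum (it can be the local \emph{minimum} of $g$), $g$ is not increasing on $(0,1]$, and the inequality $S_\omega(v)\ge S_\omega(v^{\lambda_0})$ for a $\lambda_0\le1$ on the Nehari manifold does not follow. The auxiliary assertion that $Q(v)=0$ gives $K_\omega(v)\le0$ is also unjustified: eliminating $\|\nabla v\|_{L^2}^2$ yields $K_\omega(v)=\omega\|v\|_{L^2}^2-(1-\frac{\alpha}{2})G(v)-(1-\frac{\beta}{p+1})\|v\|_{L^{p+1}}^{p+1}$, whose sign is indeterminate since $\omega>\omega_0>0$ and $\beta<p+1$ under \eqref{asmp}. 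This failure of the one-dimensional scaling picture is precisely the obstruction the paper identifies as the difference from the harmonic-potential case; note also that your core argument never actually invokes $\partial_\lambda^2S_\omega(\phi_\omega^\lambda)|_{\lambda=1}\le0$ or $\|v\|_{L^{p+1}}\ge\|\phi_\omega\|_{L^{p+1}}$, which is a warning sign, since without these the claimed implication is not expected to hold. Your third step (the uniform bound $Q\le -ch$) is only sketched and leans on the same unproved shape analysis.

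The missing ingredient is the paper's Lemma~\ref{key-lemma}: for $v$ with $\|v\|_{L^2}\le\|\phi_\omega\|_{L^2}$, $\|v\|_{L^{p+1}}\ge\|\phi_\omega\|_{L^{p+1}}$ and $Q(v)\le0$, choose $\lambda_0=\bigl(\|\phi_\omega\|_{L^{p+1}}^{p+1}/\|v\|_{L^{p+1}}^{p+1}\bigr)^{1/\beta}\le1$ so that $\|v^{\lambda_0}\|_{L^{p+1}}=\|\phi_\omega\|_{L^{p+1}}$; Lemma~\ref{vari-ineq} then gives \emph{both} $S_\omega(v^{\lambda_0})\ge S_\omega(\phi_\omega)$ and $K_\omega(v^{\lambda_0})\ge0$. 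Setting $f(\lambda)=S_\omega(v^\lambda)-\frac{\lambda^2}{2}Q(v)$, the whole point is to prove $f(\lambda_0)\le f(1)$, which yields $\frac12Q(v)\le S_\omega(v)-S_\omega(\phi_\omega)$ in one stroke and hence simultaneously the persistence of $Q(u(t))<0$ and the uniform bound $Q(u(t))\le2(S_\omega(u_0)-S_\omega(\phi_\omega))<0$. Proving $f(\lambda_0)\le f(1)$ is where the hypotheses enter: $\partial_\lambda^2S_\omega(\phi_\omega^\lambda)|_{\lambda=1}\le0$ together with $\|v\|_{L^2}\le\|\phi_\omega\|_{L^2}$ gives an upper bound on $\omega\|v\|_{L^2}^2$; combined with $K_\omega(v^{\lambda_0})\ge0$ and $Q(v)\le0$ this bounds $G(v)$ from above; and a chain of elementary monotonicity estimates closes the inequality. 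Until you supply an argument of this strength in place of the convexity claim for $g$, the proposal does not close.
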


Theorem~\ref{mainthm} follows from Theorem~\ref{blowup} and the fact that the ground state $\phi_\omega$ belongs to the closure of $\mathcal{B}_\omega\cap\Sigma$ in $H^1$-topology.

The key to the proof of Theorem~\ref{blowup} is Lemma~\ref{key-lemma} below.
The same assertion of Lemma~\ref{key-lemma} is proven in \cite[Lemma~4]{Ohtapre} for \eqref{nlsh}.
In \cite[Lemma~4]{Ohtapre},
the proof is divided into two cases
$\|x\phi_\omega\|_{L^2}^2\le\|xv\|_{L^2}^2$ and
$\|xv\|_{L^2}^2\le\|x\phi_\omega\|_{L^2}^2$.
Although the first case is easy to treat,
the second case is more complicated.
In the second case, 
the inequality $\|xv\|_{L^2}^2\le\|x\phi_\omega\|_{L^2}^2$ is used
to obtain upper bounds for the potential energy.
However, in our case, 
this argument does not work well because the sign of the potential is different from that of \eqref{nlsh}.
In our proof here,
to obtain upper bounds for the potential energy,
we use the inequality coming out of the variational characterization of the ground state (see~Lemma~\ref{vari-ineq}~\eqref{vari-ineq1} below).

We remark that
in \cite{Ohtapre,OY16},
they consider 
\[
\left\{\,v\in H^1(\mathbb{R}^N)
\mathrel{}\middle|\mathrel{}
\begin{gathered}
\tilde E(v)<\tilde E(\phi_\omega),~\|v\|_{L^2}=\|\phi_\omega\|_{L^2}, \\
\|v\|_{L^{p+1}}>\|\phi_\omega\|_{L^{p+1}},~\tilde Q(v)<0
\end{gathered}\,\right\}\cap\Sigma
\]
as the set of initial data of blowup solutions.
On the other hand,
in our definition of $\mathcal{B}_\omega$, we use the action $S_\omega$ instead of the energy $E$ in order to treat more general initial data.

We finally remark that the assumption $\partial_\lambda^2S_\omega(\phi_\omega^\lambda)|_{\lambda=1}\le0$ is not a necessary condition for the instability because it is known for \eqref{nlsd} that there exist unstable standing waves satisfying $\partial_\lambda^2\tilde S_\omega(\phi_\omega^\lambda)|_{\lambda=1}>0$ (see~\cite[Section~4]{OY16}).
It is an open problem whether the standing wave is strongly unstable or not in this case.

This paper is organized as follows.
In Section~\ref{sec:GS},
we give a proof of Proposition~\ref{prop:GS}
and prove a useful lemma (Lemma~\ref{vari-ineq} below).
In Section~\ref{sec:blowup},
we prove Theorem~\ref{blowup}.
In Section~\ref{sec:SI},
we prove Theorem~\ref{mainthm}.

\section{Existence and Variational Characterization of ground states}\label{sec:GS}
 
The aim of this section is to prove Proposition~\ref{prop:GS} and Lemma~\ref{vari-ineq} below.
Here, we assume \eqref{asmp} and $\omega>\omega_0$,
where $\omega_0$ is defined in \eqref{omega}.
Hereafter,
we denote
\begin{equation}\label{eq:Gv}
G(v)
=\gamma\int_{\mathbb{R}^N}\frac{|v(x)|^2}{|x|^\alpha}\,dx.
\end{equation}

We define
\begin{align*}
d(\omega) 
&=\inf\{\,S_\omega(v)\mid v\in H^1(\mathbb{R}^N)\setminus\{0\},~K_\omega(v)=0\,\}, \\
\mathcal{M}_\omega 
&=\{\,v\in H^1(\mathbb{R}^N)\setminus\{0\}\mid K_\omega(v)=0,~S_\omega(v)=d(\omega)\,\}.
\end{align*}
Note that since $-\omega_0$ is the smallest eigenvalue of the Schr\"odinger operator
$-\Delta-\gamma|x|^{-\alpha}$,
under the assumption $\omega>\omega_0$,
we have the equivalence of norms
\begin{equation} \label{equi-norm}
\sqrt{L_\omega(v)}
\simeq\|v\|_{H^1},
\end{equation}
where
\[
L_\omega(v)
=\|\nabla v\|_{L^2}^2
+\omega\|v\|_{L^2}^2
-G(v).
\]

First,
we show that ground states of \eqref{sp} are characterized as the minimizers for $S_\omega$ under the constraint $K_\omega=0$.

\begin{lemma} \label{lem:2.1}
$\mathcal{M}_\omega\subset\mathcal{G}_\omega$.
\end{lemma}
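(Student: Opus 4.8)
The plan is to show that any minimizer $\psi\in\mathcal{M}_\omega$ is in fact a solution of the stationary equation~\eqref{sp}, i.e.\ a critical point of $S_\omega$, and then that it has the least action among all nontrivial solutions. First I would take $\psi\in\mathcal{M}_\omega$ and apply the Lagrange multiplier principle: since $\psi$ minimizes $S_\omega$ on the constraint set $\{K_\omega=0\}$, there is $\mu\in\mathbb{R}$ with $S_\omega'(\psi)=\mu K_\omega'(\psi)$. Pairing with $\psi$ and using $K_\omega(\psi)=\langle S_\omega'(\psi),\psi\rangle=0$ gives $0=\mu\langle K_\omega'(\psi),\psi\rangle$. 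The key computation is that $\langle K_\omega'(\psi),\psi\rangle\neq0$: a direct expansion shows
\[
\langle K_\omega'(\psi),\psi\rangle
=2L_\omega(\psi)-2G(\psi)\cdot 0\cdots
\]
more precisely $\langle K_\omega'(v),v\rangle = 2\|\nabla v\|_{L^2}^2+2\omega\|v\|_{L^2}^2-2G(v)-(p+1)\|v\|_{L^{p+1}}^{p+1}$, and on the constraint $K_\omega(\psi)=0$ this equals $-(p-1)\|\psi\|_{L^{p+1}}^{p+1}<0$ because $\psi\neq0$ forces $\|\psi\|_{L^{p+1}}>0$ (otherwise $K_\omega(\psi)=L_\omega(\psi)=0$ would give $\psi=0$ by the norm equivalence~\eqref{equi-norm}). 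Hence $\mu=0$ and $S_\omega'(\psi)=0$, so $\psi\in\mathcal{F}_\omega$.

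Next I would check that $\psi$ has least action in $\mathcal{F}_\omega$. For any $\phi\in\mathcal{F}_\omega$ we have $S_\omega'(\phi)=0$, hence $K_\omega(\phi)=\langle S_\omega'(\phi),\phi\rangle=0$, so $\phi$ is admissible in the definition of $d(\omega)$ and therefore $S_\omega(\phi)\ge d(\omega)=S_\omega(\psi)$. This shows $S_\omega(\psi)\le S_\omega(\phi)$ for all $\phi\in\mathcal{F}_\omega$, i.e.\ $\psi\in\mathcal{G}_\omega$, completing the inclusion.

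The only real obstacle is the first step — verifying that the Lagrange multiplier vanishes — and the whole point is the sign computation $\langle K_\omega'(\psi),\psi\rangle=-(p-1)\|\psi\|_{L^{p+1}}^{p+1}<0$, which uses both the superlinearity $p>1$ and the fact that a nontrivial element of the constraint set cannot have vanishing $L^{p+1}$-norm (via~\eqref{equi-norm}). Everything else is a routine unwinding of definitions; no compactness or regularity input is needed here, since existence of minimizers (nonemptiness of $\mathcal{M}_\omega$, equivalently of $\mathcal{G}_\omega$) is handled separately in the proof of Proposition~\ref{prop:GS}.
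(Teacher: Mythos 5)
Your proposal is correct and follows essentially the same route as the paper: the Lagrange multiplier argument with the sign computation $\langle K_\omega'(\psi),\psi\rangle=2L_\omega(\psi)-(p+1)\|\psi\|_{L^{p+1}}^{p+1}=-(p-1)\|\psi\|_{L^{p+1}}^{p+1}<0$ to force $\mu=0$, followed by the observation that every $\phi\in\mathcal{F}_\omega$ satisfies $K_\omega(\phi)=0$ and hence $S_\omega(\phi)\ge d(\omega)$. The only difference is cosmetic: you spell out why $\|\psi\|_{L^{p+1}}>0$ via \eqref{equi-norm}, which the paper leaves implicit (and you should delete the garbled intermediate display before ``more precisely'').
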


\begin{proof}
Let $\phi\in\mathcal{M}_\omega$.
Then by $L_\omega(\phi)-\|\phi\|_{L^{p+1}}^{p+1}
=K_\omega(\phi)=0$,
we have
\begin{equation} \label{eq:1}
\langle K_\omega'(\phi),\phi\rangle
=2L_\omega(\phi)-(p+1)\|\phi\|_{L^{p+1}}^{p+1}
=-(p-1)\|\phi\|_{L^{p+1}}^{p+1}
<0.
\end{equation}
Therefore,
there exists a Lagrange multiplier $\eta\in\mathbb{R}$ such that
$S_\omega'(\phi)=\eta K_\omega'(\phi)$.
Moreover, since
\[
\eta\langle K_\omega'(\phi),\phi\rangle
=\langle S_\omega'(\phi),\phi\rangle
=K_\omega(\phi)
=0,
\]
it follows from \eqref{eq:1} that $\eta=0$,
which implies $S_\omega'(\phi)=0$.

Furthermore, if $v\in H^1(\mathbb{R}^N)$ satisfies $v\ne0$ and $S_\omega'(v)=0$, 
then by
$K_\omega(v)=\langle S_\omega'(v),v\rangle=0$
and the definition of $\mathcal{M}_\omega$,
we have
$S_\omega(\phi)\le S_\omega(v)$.
Thus, we obtain $\phi\in\mathcal{G}_\omega$.
This completes the proof.
\end{proof}

\begin{lemma}
If $\mathcal{M}_\omega$ is not empty,
then $\mathcal{G}_\omega\subset\mathcal{M}_\omega$.
\end{lemma}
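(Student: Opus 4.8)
The plan is to show that any ground state $\phi\in\mathcal{G}_\omega$ is in fact an admissible competitor for the minimization problem defining $d(\omega)$, and that its action equals $d(\omega)$, hence $\phi\in\mathcal{M}_\omega$. First I would note that $\phi\in\mathcal{G}_\omega$ satisfies $S_\omega'(\phi)=0$, so in particular $K_\omega(\phi)=\langle S_\omega'(\phi),\phi\rangle=0$; thus $\phi$ is a valid competitor in the infimum defining $d(\omega)$, which gives $d(\omega)\le S_\omega(\phi)$. The reverse inequality is the crux: I need $S_\omega(\phi)\le d(\omega)$.

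For the reverse inequality I would use the hypothesis that $\mathcal{M}_\omega\ne\emptyset$: pick $\psi\in\mathcal{M}_\omega$, so $K_\omega(\psi)=0$ and $S_\omega(\psi)=d(\omega)$. By Lemma~\ref{lem:2.1}, $\psi\in\mathcal{G}_\omega$, so $\psi$ is itself a ground state, i.e.\ a nontrivial solution of \eqref{sp} with least action among all elements of $\mathcal{F}_\omega$. Since $\phi\in\mathcal{G}_\omega\subset\mathcal{F}_\omega$ and $\psi\in\mathcal{F}_\omega$ both have the least action over $\mathcal{F}_\omega$, we get $S_\omega(\phi)=S_\omega(\psi)=d(\omega)$. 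Combining with the previous paragraph, $S_\omega(\phi)=d(\omega)$ and $K_\omega(\phi)=0$, which is exactly the statement $\phi\in\mathcal{M}_\omega$.

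So the argument is essentially a two-line bookkeeping: the only content is that $K_\omega$ vanishes on solutions of \eqref{sp} (trivial, since $K_\omega=\langle S_\omega',\cdot\rangle$ evaluated at the function) and that "least action over $\mathcal{F}_\omega$" is, by definition, a single number shared by all ground states. The main obstacle, such as it is, is simply making sure the logical quantifiers line up: one must invoke the nonemptiness hypothesis to know $d(\omega)$ is actually attained by some genuine ground state (otherwise one only has $d(\omega)\le S_\omega(\phi)$ with no matching lower bound), and then Lemma~\ref{lem:2.1} to promote that minimizer to an element of $\mathcal{G}_\omega$. No estimates, compactness, or regularity are needed here — the real analytic work (existence of minimizers, the equivalence of norms \eqref{equi-norm}, and the eventual identification $\mathcal{M}_\omega=\mathcal{G}_\omega$) is deferred to the subsequent lemmas that establish $\mathcal{M}_\omega\ne\emptyset$ unconditionally.
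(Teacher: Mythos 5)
Your proof is correct and follows essentially the same route as the paper: take $\psi\in\mathcal{M}_\omega$, upgrade it to a ground state via Lemma~\ref{lem:2.1}, conclude $S_\omega(\phi)=S_\omega(\psi)=d(\omega)$, and note $K_\omega(\phi)=\langle S_\omega'(\phi),\phi\rangle=0$ so that $\phi$ is an admissible competitor. The only cosmetic difference is that you record the inequality $d(\omega)\le S_\omega(\phi)$ explicitly, which the paper leaves implicit.
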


\begin{proof}
Let $\phi\in\mathcal{G}_\omega$.
Since $\mathcal{M}_\omega$ is not empty,
we take $\psi\in\mathcal{M}_\omega$.
Then by Lemma~\ref{lem:2.1},
we have $\psi\in\mathcal{G}_\omega$.
Therefore, if $v\in H^1(\mathbb{R}^N)$ satisfies $v\ne0$ and $K_\omega(v)=0$,
then
$S_\omega(\phi)
=S_\omega(\psi)
\le S_\omega(v)
$.
This implies $\phi\in\mathcal{M}_\omega$.
This completes the proof.
\end{proof}

Next, we show that $\mathcal M_\omega$ is not empty.
By using 
\begin{align} \label{skn}
S_{\omega}(v)
&=\frac{1}{2}K_{\omega}(v)
+\frac{p-1}{2(p+1)}\|v\|_{L^{p+1}}^{p+1} \\ \notag
&=\frac{1}{p+1}K_{\omega}(v)
+\frac{p-1}{2(p+1)}L_\omega(v),
\end{align}
we rewrite
\begin{align} \label{eq:don}
d(\omega)
&=\inf\left\{\,\frac{p-1}{2(p+1)}\|v\|_{L^{p+1}}^{p+1}
\mathrel{}\middle|\mathrel{}
v\in H^1(\mathbb{R}^N)\setminus\{0\},~K_{\omega}(v)=0\,\right\} \\
&=\inf\left\{\,\frac{p-1}{2(p+1)}L_\omega(v)
\mathrel{}\middle|\mathrel{}
v\in H^1(\mathbb{R}^N)\setminus\{0\},~K_{\omega}(v)=0\,\right\}. \label{eq:dol}
\end{align}

\begin{lemma}\label{lem:k1}
If $K_\omega(v)<0$,
then 
\[
\frac{p-1}{2(p+1)}\|v\|_{L^{p+1}}^{p+1}
>d(\omega),\quad
\frac{p-1}{2(p+1)}L_\omega(v)
>d(\omega).
\]
In particular,
\begin{align}\notag
d(\omega)
&=\inf\left\{\,\frac{p-1}{2(p+1)}\|v\|_{L^{p+1}}^{p+1}
\mathrel{}\middle|\mathrel{}
v\in H^1(\mathbb{R}^N)\setminus\{0\},~K_\omega(v)\le0\,\right\} \\ \label{eq:dkl}
&=\inf\left\{\,\frac{p-1}{2(p+1)}L_\omega(v)
\mathrel{}\middle|\mathrel{}
v\in H^1(\mathbb{R}^N)\setminus\{0\},~K_\omega(v)\le0\,\right\}.
\end{align}
\end{lemma}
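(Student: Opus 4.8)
The plan is to run a one-parameter scaling argument in $\lambda v$, using the two equivalent descriptions \eqref{eq:don}--\eqref{eq:dol} of $d(\omega)$. First note that $K_\omega(v)<0$ forces $v\neq0$, and by the norm equivalence \eqref{equi-norm} we have $L_\omega(v)\simeq\|v\|_{H^1}^2>0$; moreover $K_\omega(v)=L_\omega(v)-\|v\|_{L^{p+1}}^{p+1}<0$ gives $\|v\|_{L^{p+1}}^{p+1}>L_\omega(v)>0$. So both of the quantities appearing in the statement are strictly positive.

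Next I would consider the continuous function $h(\lambda):=K_\omega(\lambda v)=\lambda^2L_\omega(v)-\lambda^{p+1}\|v\|_{L^{p+1}}^{p+1}$ for $\lambda\ge0$. Since $p+1>2$ and $L_\omega(v)>0$, we have $h(\lambda)>0$ for all sufficiently small $\lambda>0$, while $h(1)=K_\omega(v)<0$. By the intermediate value theorem there is $\lambda_0\in(0,1)$ with $h(\lambda_0)=0$, that is, $\lambda_0v\neq0$ and $K_\omega(\lambda_0v)=0$. Applying \eqref{eq:don} and \eqref{eq:dol} to the admissible function $\lambda_0v$, and using $\|\lambda_0v\|_{L^{p+1}}^{p+1}=\lambda_0^{p+1}\|v\|_{L^{p+1}}^{p+1}$ and $L_\omega(\lambda_0v)=\lambda_0^2L_\omega(v)$ together with $0<\lambda_0<1$, gives
\[
d(\omega)\le\frac{p-1}{2(p+1)}\|\lambda_0v\|_{L^{p+1}}^{p+1}<\frac{p-1}{2(p+1)}\|v\|_{L^{p+1}}^{p+1},
\qquad
d(\omega)\le\frac{p-1}{2(p+1)}L_\omega(\lambda_0v)<\frac{p-1}{2(p+1)}L_\omega(v),
\]
which is the first assertion.

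For the ``in particular'' identities \eqref{eq:dkl}: since $\{v\neq0:K_\omega(v)=0\}\subset\{v\neq0:K_\omega(v)\le0\}$, the infima over the larger set are $\le d(\omega)$ by \eqref{eq:don}--\eqref{eq:dol}. Conversely, let $v\neq0$ with $K_\omega(v)\le0$: if $K_\omega(v)=0$ then $\tfrac{p-1}{2(p+1)}\|v\|_{L^{p+1}}^{p+1}\ge d(\omega)$ and $\tfrac{p-1}{2(p+1)}L_\omega(v)\ge d(\omega)$ by \eqref{eq:don}--\eqref{eq:dol}, while if $K_\omega(v)<0$ the same quantities are $>d(\omega)$ by the first part; in either case they are $\ge d(\omega)$, so the infima over $\{K_\omega\le0\}$ are $\ge d(\omega)$, and equality follows. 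I do not expect a genuine obstacle here: this is a standard fibering/scaling argument, and the only points requiring a little care are the strict inequality $\lambda_0<1$ (which is where $p+1>2$ enters) and the positivity $L_\omega(v)>0$ (which is where $\omega>\omega_0$ and \eqref{equi-norm} enter).
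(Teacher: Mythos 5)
Your proposal is correct and follows essentially the same route as the paper: rescale $v$ to $\lambda_0 v$ with $K_\omega(\lambda_0 v)=0$ and $0<\lambda_0<1$, then compare with the reformulations \eqref{eq:don}--\eqref{eq:dol}. The only cosmetic difference is that you locate $\lambda_0$ by the intermediate value theorem, whereas the paper writes it down explicitly as $\lambda_1=\bigl(L_\omega(v)/\|v\|_{L^{p+1}}^{p+1}\bigr)^{1/(p-1)}$, which is the unique positive zero your argument produces.
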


\begin{proof}
Let
\[
\lambda_1=\left(\frac{L_\omega(v)}{\|v\|_{L^{p+1}}^{p+1}}\right)^{1/(p-1)},
\]
where note that $L_\omega(v)>0$ by \eqref{equi-norm}.
Then since
$
K_\omega(\lambda v)
=\lambda^2L_\omega(v)
-\lambda^{p+1}\|v\|_{L^{p+1}}^{p+1}
$
and $K_\omega(v)<0$,
we have $K_\omega(\lambda_1v)=0$ and $0<\lambda_1<1$.
Therefore, by \eqref{eq:don},
\[
d(\omega)
\le\frac{p-1}{2(p+1)}\|\lambda_1 v\|_{L^{p+1}}^{p+1}
=\lambda_1^{p+1}\frac{p-1}{2(p+1)}\|v\|_{L^{p+1}}^{p+1}
<\frac{p-1}{2(p+1)}\|v\|_{L^{p+1}}^{p+1}.
\]
Similarly, by using \eqref{eq:dol}, we obtain
$d(\omega)
<\frac{p-1}{2(p+1)}L_\omega(v)$.
This completes the proof.
\end{proof}

It is well known that in the nonpotential case $\gamma=0$,
the set of all minimizers 
\begin{equation*} 
\mathcal{M}_{\omega}^0\mathrel{\mathop:}=\{\,v\in H^1(\mathbb{R}^N)\setminus\{0\}\mid K_\omega^0(v)=0,~S_\omega^0(v)=d^0(\omega)\,\}
\end{equation*}
is not empty (see e.g.\ \cite{LeCoz08, Lions84}), where 
\begin{align*}
S_\omega^0(v)
&=\frac12\|\nabla v\|_{L^2}^2
+\frac\omega2\|v\|_{L^2}^2
-\frac{1}{p+1}\|v\|_{L^{p+1}}^{p+1}, \\
K_\omega^0(v)
&=\|\nabla v\|_{L^2}^2
+\omega\|v\|_{L^2}^2
-\|v\|_{L^{p+1}}^{p+1}, \\
d^0(\omega) 
&=\inf\{\,S_\omega^0(v)\mid v\in H^1(\mathbb{R}^N)\setminus\{0\},~K_\omega^0(v)=0\,\} \\
&=\inf\left\{\,\frac{p-1}{2(p+1)}\|v\|_{L^{p+1}}^{p+1}
\mathrel{}\middle|\mathrel{}
v\in H^1(\mathbb{R}^N)\setminus\{0\},~K_\omega^0(v)=0\,\right\}.
\end{align*}

\begin{lemma}\label{lem:do}
$d^0(\omega)>d(\omega)>0$.
\end{lemma}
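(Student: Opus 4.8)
The plan is to prove the two inequalities separately: the positivity $d(\omega)>0$, and then the strict comparison $d^0(\omega)>d(\omega)$.

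First I would establish $d(\omega)>0$. By the norm equivalence \eqref{equi-norm} and the Sobolev embedding $H^1(\mathbb{R}^N)\hookrightarrow L^{p+1}(\mathbb{R}^N)$ (valid under \eqref{asmp} since $p+1<2N/(N-2)$ when $N\ge3$, and for all finite exponents when $N=1,2$), there are constants $C_1,C_2>0$ with $\|v\|_{H^1}^2\le C_1L_\omega(v)$ and $\|v\|_{L^{p+1}}^{p+1}\le C_2\|v\|_{H^1}^{p+1}$ for all $v\in H^1(\mathbb{R}^N)$. If $v\ne0$ satisfies $K_\omega(v)=0$, then $\|v\|_{L^{p+1}}^{p+1}=L_\omega(v)>0$, and combining the two estimates gives $\|v\|_{L^{p+1}}^{p+1}\le C_2C_1^{(p+1)/2}(\|v\|_{L^{p+1}}^{p+1})^{(p+1)/2}$, hence, since $p>1$,
\[
\|v\|_{L^{p+1}}^{p+1}\ge c_0\mathrel{\mathop:}=\bigl(C_2C_1^{(p+1)/2}\bigr)^{-2/(p-1)}>0.
\]
By the representation \eqref{eq:don} of $d(\omega)$, this yields $d(\omega)\ge\frac{p-1}{2(p+1)}c_0>0$.

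Next, for $d^0(\omega)>d(\omega)$ I would work with an actual minimizer. Since $\mathcal{M}_\omega^0$ is nonempty, pick $\psi\in\mathcal{M}_\omega^0$, so $K_\omega^0(\psi)=0$ and $S_\omega^0(\psi)=d^0(\omega)$. Because $\psi\ne0$, $\gamma>0$, and $|x|^{-\alpha}>0$ a.e.\ (the integral being finite by Hardy's inequality under \eqref{asmp}), we have $G(\psi)=\gamma\int_{\mathbb{R}^N}|\psi(x)|^2|x|^{-\alpha}\,dx>0$. Therefore
\[
K_\omega(\psi)=K_\omega^0(\psi)-G(\psi)=-G(\psi)<0,
\]
so Lemma~\ref{lem:k1} applies and gives $d(\omega)<\frac{p-1}{2(p+1)}\|\psi\|_{L^{p+1}}^{p+1}$. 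On the other hand, from the identity $S_\omega^0(v)=\frac12K_\omega^0(v)+\frac{p-1}{2(p+1)}\|v\|_{L^{p+1}}^{p+1}$ and $K_\omega^0(\psi)=0$ we get $\frac{p-1}{2(p+1)}\|\psi\|_{L^{p+1}}^{p+1}=S_\omega^0(\psi)=d^0(\omega)$. Combining the two, $d(\omega)<d^0(\omega)$, which finishes the proof.

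I do not expect a serious obstacle here. The only quantitative input is the Sobolev-type lower bound used for $d(\omega)>0$, and the comparison is a direct combination of the variational identities with the strict positivity of the attractive term $G(\psi)$ on nonzero functions together with Lemma~\ref{lem:k1}. The one point worth being careful about is that the \emph{strictness} in $d^0(\omega)>d(\omega)$ genuinely relies on using a minimizer $\psi$ (so that $S_\omega^0(\psi)=d^0(\omega)$ exactly); a minimizing sequence would only give the non-strict inequality $d(\omega)\le d^0(\omega)$.
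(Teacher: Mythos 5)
Your proposal is correct and follows essentially the same route as the paper: the strict inequality $d^0(\omega)>d(\omega)$ is obtained exactly as in the paper by taking a minimizer $\psi\in\mathcal{M}_\omega^0$, observing $K_\omega(\psi)=-G(\psi)<0$, and invoking Lemma~\ref{lem:k1}, while the positivity $d(\omega)>0$ uses the same Sobolev-plus-norm-equivalence lower bound, only packaged with slightly different exponents. Your closing remark about strictness requiring an actual minimizer is accurate and reflects why the paper records the nonemptiness of $\mathcal{M}_\omega^0$ beforehand.
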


\begin{proof}
First, we show $d^0(\omega)>d(\omega)$.
Since $\mathcal{M}_\omega^0$ is not empty, 
we take $\psi\in\mathcal{M}_\omega^0$.
Since 
\[
K_\omega(\psi)
=K_\omega^0(\psi)
-G(\psi)
=-G(\psi)
<0,
\]
by Lemma~\ref{lem:k1},
we have
\[
d(\omega)
<\frac{p-1}{2(p+1)}\|\psi\|_{L^{p+1}}^{p+1}
=d^0(\omega).
\]

Next, we show that $d(\omega)>0$.
Let $v\in H^1(\mathbb{R}^N)$ satisfy $v\ne0$ and $K_{\omega}(v)=0$.
By the Sobolev embedding,
\eqref{equi-norm},
and $L_{\omega}(v)=\|v\|_{L^{p+1}}^{p+1}$,
we have
\[
\|v\|_{L^{p+1}}^2
\le C_1\|v\|_{H^1}^2
\le C_2L_{\omega}(v)
=C_2\|v\|_{L^{p+1}}^{p+1}.
\]
for some $C_1,C_2>0$.
Since $v\ne0$,
we have $\|v\|_{L^{p+1}}\ge C_2^{-1/(p-1)}$.
Taking the infimum over $v$,
we obtain $d(\omega)>0$.
This completes the proof.
\end{proof}

\begin{lemma}
Let $(v_n)_n\subset H^1(\mathbb{R}^N)$ be a minimizing sequence for $d(\omega)$, 
that is, 
\[
v_n\ne0,\quad
K_\omega(v_n)=0,\quad
S_\omega(v_n)\to d(\omega).
\]
Then there exist a subsequence $(v_{n_k})_k$ of $(v_{n})_n$ and $v_0\in H^1(\mathbb{R}^N)$ such that
$v_{n_k}\to v_0$ in $H^1(\mathbb{R}^N)$,
$K_\omega(v_0)=0$,
and $S_\omega(v_0)=d(\omega)$.
In particular, 
$\mathcal{M}_\omega$ is not empty.
\end{lemma}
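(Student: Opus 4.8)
The plan is a standard concentration--compactness argument for the minimizing sequence; the one genuine difficulty is the lack of compactness of the embedding $H^1(\mathbb R^N)\hookrightarrow L^{p+1}(\mathbb R^N)$, which I overcome using the strict inequality $d^0(\omega)>d(\omega)$ of Lemma~\ref{lem:do}. Since $K_\omega(v_n)=0$, the identities \eqref{skn} give $S_\omega(v_n)=\frac{p-1}{2(p+1)}L_\omega(v_n)=\frac{p-1}{2(p+1)}\|v_n\|_{L^{p+1}}^{p+1}$, hence $L_\omega(v_n)=\|v_n\|_{L^{p+1}}^{p+1}\to m\mathrel{\mathop:}=\frac{2(p+1)}{p-1}d(\omega)$, which lies in $(0,\infty)$ by Lemma~\ref{lem:do}. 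So $(v_n)_n$ is bounded in $H^1(\mathbb R^N)$ by \eqref{equi-norm}, and after passing to a subsequence, $v_n\rightharpoonup v_0$ weakly in $H^1(\mathbb R^N)$, $v_n\to v_0$ in $L^q_{\mathrm{loc}}(\mathbb R^N)$ for $q<2N/(N-2)$ (for every finite $q$ if $N\le2$), and $v_n\to v_0$ a.e.

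The key analytic input, and the place where $\alpha<\min\{2,N\}$ is used, is that $G$ is completely continuous on $H^1(\mathbb R^N)$: if $u_n\rightharpoonup u$ weakly in $H^1$, then $G(u_n)\to G(u)$. I would prove this by splitting the defining integral at $\{|x|\le R\}$ and $\{|x|>R\}$: on $\{|x|>R\}$ the contribution is bounded by $CR^{-\alpha}$ thanks to $|x|^{-\alpha}\le R^{-\alpha}$ and the $L^2$-bound, while on $\{|x|\le R\}$ one picks $s\in(\max\{1,N/2\},N/\alpha)$, a nonempty interval by $\alpha<\min\{2,N\}$, so that $|x|^{-\alpha}\in L^s(B_R)$ and $H^1(B_R)\hookrightarrow L^{2s'}(B_R)$ compactly; then H\"older's inequality makes the contribution tend to $0$ as $n\to\infty$, and letting $R\to\infty$ finishes. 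Consequences: $L_\omega$ is weakly lower semicontinuous (since $\sqrt{L_\omega}$ is, by \eqref{equi-norm}, a Hilbertian norm equivalent to $\|\cdot\|_{H^1}$), and, with $w_n\mathrel{\mathop:}=v_n-v_0\rightharpoonup0$, the orthogonality of weak limits in the quadratic terms, the relation $G(v_n)=G(v_0)+G(w_n)+o(1)$, and the Brezis--Lieb lemma for $\|\cdot\|_{L^{p+1}}^{p+1}$ yield
\[
\|v_n\|_{L^{p+1}}^{p+1}=\|v_0\|_{L^{p+1}}^{p+1}+\|w_n\|_{L^{p+1}}^{p+1}+o(1),\qquad K_\omega(v_n)=K_\omega(v_0)+K_\omega(w_n)+o(1).
\]

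Now I would proceed in four steps. \emph{(i) $v_0\ne0$.} If $v_0=0$, then $G(v_n)\to0$, so $\|\nabla v_n\|_{L^2}^2+\omega\|v_n\|_{L^2}^2=K_\omega(v_n)+G(v_n)+\|v_n\|_{L^{p+1}}^{p+1}\to m>0$; hence $\lambda_n\mathrel{\mathop:}=\bigl((\|\nabla v_n\|_{L^2}^2+\omega\|v_n\|_{L^2}^2)/\|v_n\|_{L^{p+1}}^{p+1}\bigr)^{1/(p-1)}$ is well defined for large $n$, $\lambda_n\to1$, and $K^0_\omega(\lambda_nv_n)=0$, so $d^0(\omega)\le S^0_\omega(\lambda_nv_n)=\frac{p-1}{2(p+1)}\lambda_n^{p+1}\|v_n\|_{L^{p+1}}^{p+1}\to d(\omega)$, contradicting Lemma~\ref{lem:do}. \emph{(ii) $K_\omega(v_0)=0$.} If $K_\omega(v_0)>0$, then $K_\omega(w_n)<0$ for large $n$ by the last display, so Lemma~\ref{lem:k1} gives $\frac{p-1}{2(p+1)}\|w_n\|_{L^{p+1}}^{p+1}>d(\omega)$; together with the Brezis--Lieb splitting and $\frac{p-1}{2(p+1)}\|v_n\|_{L^{p+1}}^{p+1}\to d(\omega)$ this forces $\|v_0\|_{L^{p+1}}^{p+1}\le0$, contradicting $v_0\ne0$; hence $K_\omega(v_0)\le0$. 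If instead $K_\omega(v_0)<0$, Lemma~\ref{lem:k1} gives $\frac{p-1}{2(p+1)}\|v_0\|_{L^{p+1}}^{p+1}>d(\omega)$, while weak lower semicontinuity and $\|v_n\|_{L^{p+1}}^{p+1}\to m$ give $\frac{p-1}{2(p+1)}\|v_0\|_{L^{p+1}}^{p+1}\le d(\omega)$, a contradiction. So $K_\omega(v_0)=0$. \emph{(iii) Minimality.} Since $v_0\ne0$ and $K_\omega(v_0)=0$, the definition of $d(\omega)$ gives $S_\omega(v_0)\ge d(\omega)$, while weak lower semicontinuity of $L_\omega$ and $K_\omega(v_0)=0$ give $S_\omega(v_0)=\frac{p-1}{2(p+1)}L_\omega(v_0)\le\frac{p-1}{2(p+1)}\lim L_\omega(v_n)=d(\omega)$; hence $S_\omega(v_0)=d(\omega)$, i.e.\ $v_0\in\mathcal M_\omega$, so $\mathcal M_\omega\ne\emptyset$. \emph{(iv) Strong convergence.} From (iii), $L_\omega(v_0)=m=\lim L_\omega(v_n)$; expanding the quadratic form $L_\omega(v_n-v_0)$ and using the complete continuity of $G$ and weak convergence to handle the cross terms gives $L_\omega(v_n-v_0)\to0$, whence $v_n\to v_0$ in $H^1(\mathbb R^N)$ by \eqref{equi-norm}.

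The step I expect to be the crux is the combination of (i) and (ii): excluding the possibility that the mass of a minimizing sequence disperses or escapes to spatial infinity, where the limiting profile detaches from the potential well and becomes a competitor for the translation-invariant problem. This is precisely what the strict inequality $d^0(\omega)>d(\omega)$ of Lemma~\ref{lem:do} rules out, and what makes that comparison legitimate is the complete continuity of $G$, which is the point at which the hypothesis $\alpha<\min\{2,N\}$ is essential.
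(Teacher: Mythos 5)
Your proof is correct and follows essentially the same route as the paper's: boundedness via \eqref{eq:dol} and \eqref{equi-norm}, weak compactness, the weak continuity of $G$ combined with the strict inequality $d^0(\omega)>d(\omega)$ of Lemma~\ref{lem:do} to rule out vanishing of the weak limit, and the Brezis--Lieb splitting together with Lemma~\ref{lem:k1} to upgrade to strong convergence. The only cosmetic difference is the order of deductions: you establish $K_\omega(v_0)=0$ and $S_\omega(v_0)=d(\omega)$ before proving strong convergence, whereas the paper first shows $K_\omega(v_0)\le0$, deduces $L_\omega(v_n-v_0)\to0$ from \eqref{eq:dkl} and weak lower semicontinuity, and then obtains the remaining claims by continuity of the functionals.
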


\begin{proof}
First, by $K_\omega(v_n)=0$,
$S_\omega(v_n)\to d(\omega)$,
and \eqref{skn},
we have
\begin{equation}\label{eq:lnd}
\frac{p-1}{2(p+1)}L_\omega(v_n)
=\frac{p-1}{2(p+1)}\|v_n\|_{L^{p+1}}^{p+1}
\to d(\omega).
\end{equation}
Therefore, it follows from \eqref{equi-norm} that $(v_n)_n$ is bounded in $H^1(\mathbb{R}^N)$.
This implies that there exist a subsequence of $(v_n)_n$,
which is still denoted by $(v_n)_n$,
and $v_0\in H^1(\mathbb{R}^N)$ such that $v_n\rightharpoonup v_0$ weakly in $H^1(\mathbb{R}^N)$.

Next, we show $v_0\ne0$.
Since $v_n\ne0$,
letting
\[
\lambda_n
=\left(\frac{\|\nabla v_n\|_{L^2}^2+\omega\|v_n\|_{L^2}^2}{\|v_n\|_{L^{p+1}}^{p+1}}\right)^{1/(p-1)}
=\left(\frac{L_\omega(v_n)+G(v_n)}{\|v_n\|_{L^{p+1}}^{p+1}}\right)^{1/(p-1)},
\]
then we have $\lambda_n>0$ and $K_\omega^0(\lambda_nv_n)=0$.
Moreover, by \eqref{eq:lnd}
and the weak continuity of the potential energy (cf.\ \cite[Theorem~11.4]{LL}),
we obtain
\begin{equation}\label{eq:lil}
\lim_{n\to\infty}\lambda_n
=\left(\frac{d(\omega)+\frac{p-1}{2(p+1)}G(v_0)}{d(\omega)}\right)^{1/(p-1)}.
\end{equation}
By Lemma~\ref{lem:do},
$K_\omega^0(\lambda_nv_n)=0$,
and the definition of $d^0(\omega)$,
it follows  that
\[
d(\omega)<d^0(\omega)
\le\frac{p-1}{2(p+1)}\|\lambda_nv_n\|_{L^{p+1}}^{p+1}
=\lambda_n^{p+1}\frac{p-1}{2(p+1)}\|v_n\|_{L^{p+1}}^{p+1}
\]
for all $n\in\mathbb{N}$.
Therefore, taking the limit,
by \eqref{eq:lnd}, 
\eqref{eq:lil},
and $d(\omega)>0$, we obtain $G(v_0)>0$.
This implies $v_0\ne0$.

Finally, we show the strong convergence of $(v_n)_n$ in $H^1(\mathbb{R}^N)$.
Taking a subsequence of $(v_n)_n$ if necessary,
we may assume that $v_n\to v_0$ a.e.\ in $\mathbb{R}^N$.
Then by using the Brezis--Lieb Lemma~\cite{BL83},
we have
\begin{align}\label{eq:bll}
L_\omega(v_n)
-L_\omega(v_n-v_0)
&\to L_\omega(v_0), \\ \label{eq:blk}
-K_\omega(v_n-v_0)
&\to K_\omega(v_0),
\end{align}
where we used $K_\omega(v_n)=0$ in \eqref{eq:blk}.
Since $L_\omega(v_0)>0$ by $v_0\ne0$,
it follows from \eqref{eq:bll}
and \eqref{eq:lnd} that
\[
\frac{p-1}{2(p+1)}\lim_{n\to\infty}L_\omega(v_n-v_0)
<\frac{p-1}{2(p+1)}\lim_{n\to\infty}L_\omega(v_n)
=d(\omega).
\]
From this and \eqref{eq:dkl},
we have $K_\omega(v_n-v_0)>0$ for large $n$.
Therefore, by \eqref{eq:blk}, we obtain $K_\omega(v_0)\le0$,
and thus, by \eqref{eq:dkl} and the weak lower semicontinuity of norms,
\[
d(\omega)
\le\frac{p-1}{2(p+1)}L_\omega(v_0)
\le\frac{p-1}{2(p+1)}\lim_{n\to\infty}L_\omega(v_n)
=d(\omega).
\]
This and \eqref{eq:bll} imply that $L_\omega(v_n-v_0)\to0$,
and therefore, $v_n\to v_0$ in $H^1(\mathbb{R}^N)$.
This completes the proof.
\end{proof}

Finally,
we give a useful lemma for the proof of Theorem~\ref{blowup}.

\begin{lemma} \label{vari-ineq}
Let $\phi\in\mathcal{G}_\omega$.
If $v\in H^1(\mathbb{R}^N)$ satisfies
$\|v\|_{L^{p+1}}=\|\phi\|_{L^{p+1}}$,
then the following hold.
\begin{enumerate}[label=\textup{(\roman{*})}, ref=\textup{\roman{*}}]
\item \label{vari-ineq1}
$K_\omega(v)\ge0$,
\item\label{vari-ineq2}
$S_\omega(v)\ge S_\omega(\phi)$.
\end{enumerate}
\end{lemma}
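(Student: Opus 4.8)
The plan is to derive both assertions directly from the variational characterization of $d(\omega)$ proved in this section, combined with the identity \eqref{skn}. First I would record the two facts about $\phi$ that drive the argument: by the lemmas above $\mathcal{G}_\omega=\mathcal{M}_\omega$, so $K_\omega(\phi)=0$ and $S_\omega(\phi)=d(\omega)$, whence \eqref{skn} gives
\[
\frac{p-1}{2(p+1)}\|\phi\|_{L^{p+1}}^{p+1}=S_\omega(\phi)=d(\omega)>0,
\]
the positivity being Lemma~\ref{lem:do}. In particular $\|\phi\|_{L^{p+1}}>0$, so the hypothesis $\|v\|_{L^{p+1}}=\|\phi\|_{L^{p+1}}$ forces $v\ne0$; this is the one point that needs a little care, since Lemma~\ref{lem:k1} and the definition of $d(\omega)$ are stated only for nonzero functions.

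For \eqref{vari-ineq1} I would argue by contradiction. Suppose $K_\omega(v)<0$. Then Lemma~\ref{lem:k1} applies to $v$ and yields
\[
\frac{p-1}{2(p+1)}\|v\|_{L^{p+1}}^{p+1}>d(\omega)=\frac{p-1}{2(p+1)}\|\phi\|_{L^{p+1}}^{p+1},
\]
hence $\|v\|_{L^{p+1}}>\|\phi\|_{L^{p+1}}$, contradicting the equality in the hypothesis. Therefore $K_\omega(v)\ge0$.

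For \eqref{vari-ineq2} I would simply substitute $K_\omega(v)\ge0$ into \eqref{skn}:
\[
S_\omega(v)=\frac12K_\omega(v)+\frac{p-1}{2(p+1)}\|v\|_{L^{p+1}}^{p+1}\ge\frac{p-1}{2(p+1)}\|v\|_{L^{p+1}}^{p+1}=\frac{p-1}{2(p+1)}\|\phi\|_{L^{p+1}}^{p+1}=S_\omega(\phi),
\]
using the hypothesis on the $L^{p+1}$-norms in the penultimate step and the first displayed identity in the last; this closes the proof.

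I do not expect a substantial obstacle: the whole lemma reduces to the two implications ``$K_\omega(v)<0\Rightarrow\|v\|_{L^{p+1}}>\|\phi\|_{L^{p+1}}$'' and ``$K_\omega(v)\ge0\Rightarrow S_\omega(v)\ge d(\omega)$ at the ground-state $L^{p+1}$-level'', both immediate from material already established, and the only thing to watch is the nontriviality of $v$, which is guaranteed by $d(\omega)>0$.
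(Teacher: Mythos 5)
Your proof is correct and is essentially the paper's own argument: the paper likewise deduces (i) from Lemma~\ref{lem:k1} together with the identity $d(\omega)=\frac{p-1}{2(p+1)}\|\phi\|_{L^{p+1}}^{p+1}$, and (ii) from \eqref{skn} and (i). Your extra remark about $v\ne0$ is harmless but not needed, since $K_\omega(v)<0$ already forces $v\ne0$ because $K_\omega(0)=0$.
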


\begin{proof}
Inequality~\eqref{vari-ineq1} follows from Lemma~\ref{lem:k1} and 
$
d(\omega)=\frac{p-1}{2(p+1)}\|\phi\|_{L^{p+1}}^{p+1}
$.
Inequality~\eqref{vari-ineq2} follows from \eqref{skn} and \eqref{vari-ineq1}.
\end{proof}

\section{Blowup solutions}\label{sec:blowup}

In this section,
we prove Theorem~\ref{blowup}.
Throughout this section,
we impose the same assumption as in Theorem~\ref{blowup},
that is,
we assume \eqref{asmp}, $\omega>\omega_0$, and
\begin{equation}\label{p2el}
\partial_\lambda^2S_\omega(\phi_\omega^\lambda)|_{\lambda=1}
=\|\nabla\phi_\omega\|_{L^2}^2
-\frac{\alpha(\alpha-1)}{2}G(\phi_\omega)
-\frac{\beta(\beta-1)}{p+1}\|\phi_\omega\|_{L^{p+1}}^{p+1}
\le0,
\end{equation}
where $v^\lambda(x)=\lambda^{N/2}v(\lambda x)$, 
$G$ is defined in \eqref{eq:Gv}, 
and
\[
\beta
=\frac{N(p-1)}{2}.
\]
By using this notation,
we have
\begin{align}\label{S^lam}
S_\omega(v^\lambda)
&=\frac{\lambda^2}{2}\|\nabla v\|_{L^2}^2
+\frac\omega2\|v\|_{L^2}^2
-\frac{\lambda^\alpha}{2}G(v)
-\frac{\lambda^\beta}{p+1}\|v\|_{L^{p+1}}^{p+1}, \\
Q(v^\lambda) \label{Q^lam}
&=\lambda^2\|\nabla v\|_{L^2}^2
-\frac{\alpha\lambda^\alpha}{2}G(v)
-\frac{\beta\lambda^\beta}{p+1}\|v\|_{L^{p+1}}^{p+1}
=\lambda\partial_\lambda S_\omega(v^\lambda), \\
K_\omega(v^\lambda) \label{K^lam}
&=\lambda^2\|\nabla v\|_{L^2}^2
+\omega\|v\|_{L^2}^2
-\lambda^\alpha G(v)
-\lambda^\beta\|v\|_{L^{p+1}}^{p+1}.
\end{align}
Here,
we define
\[\mathcal{A}_\omega
=\{\,v\in H^1(\mathbb{R}^N)\mid 
S_\omega(v)<S_\omega(\phi_\omega),~
\|v\|_{L^2}\le\|\phi_\omega\|_{L^2},~
\|v\|_{L^{p+1}}>\|\phi_\omega\|_{L^{p+1}}\,\}.
\]
Recall that
\[
\mathcal{B}_\omega
=\{\,v\in\mathcal{A}_\omega\mid Q(v)<0\,\}.
\]

\begin{lemma}\label{inva-flow}
If $u_0\in\mathcal{A}_\omega$, then the solution $u(t)$ of \eqref{nls} with $u(0)=u_0$ satisfies $u(t)\in\mathcal{A}_\omega$ for all $t\in I_{\max}$.
\end{lemma}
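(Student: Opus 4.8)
The plan is to combine the conservation laws of \eqref{nls} with the variational inequality Lemma~\ref{vari-ineq}~\eqref{vari-ineq2}. First I would note that since $S_\omega(v)=E(v)+\frac{\omega}{2}\|v\|_{L^2}^2$ and both $E$ and $\|\cdot\|_{L^2}$ are conserved along the flow, the action is conserved as well: $S_\omega(u(t))=S_\omega(u_0)$ for all $t\in I_{\max}$. Hence the first defining inequality of $\mathcal{A}_\omega$, namely $S_\omega(u(t))<S_\omega(\phi_\omega)$, is immediate from $u_0\in\mathcal{A}_\omega$. Likewise $\|u(t)\|_{L^2}=\|u_0\|_{L^2}\le\|\phi_\omega\|_{L^2}$ takes care of the second condition, so only the third condition $\|u(t)\|_{L^{p+1}}>\|\phi_\omega\|_{L^{p+1}}$ requires an argument.

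For the $L^{p+1}$ condition I would argue by contradiction using continuity. The map $t\mapsto\|u(t)\|_{L^{p+1}}$ is continuous on $I_{\max}=[0,T^+)$ because $u\in C(I_{\max},H^1(\mathbb{R}^N))$ and $H^1(\mathbb{R}^N)\hookrightarrow L^{p+1}(\mathbb{R}^N)$ under \eqref{asmp}. Since $\|u_0\|_{L^{p+1}}>\|\phi_\omega\|_{L^{p+1}}$, if the strict inequality failed at some $t_1\in I_{\max}$, the intermediate value theorem would supply a time $t_0$ strictly between $0$ and $t_1$ with $\|u(t_0)\|_{L^{p+1}}=\|\phi_\omega\|_{L^{p+1}}$. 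Applying Lemma~\ref{vari-ineq}~\eqref{vari-ineq2} to $v=u(t_0)$ would then give $S_\omega(u(t_0))\ge S_\omega(\phi_\omega)$, contradicting $S_\omega(u(t_0))=S_\omega(u_0)<S_\omega(\phi_\omega)$. This contradiction forces $\|u(t)\|_{L^{p+1}}>\|\phi_\omega\|_{L^{p+1}}$ for all $t\in I_{\max}$, and the three conditions together give $u(t)\in\mathcal{A}_\omega$.

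The main (and essentially only) obstacle is the $L^{p+1}$ condition; the action and $L^2$ conditions are pure consequences of conservation. The decisive input is the variational characterization packaged as Lemma~\ref{vari-ineq}~\eqref{vari-ineq2}: the ground state minimizes $S_\omega$ among functions with the same $L^{p+1}$-norm, which is precisely what prevents the trajectory from reaching the level set $\{\|\cdot\|_{L^{p+1}}=\|\phi_\omega\|_{L^{p+1}}\}$ while keeping its action strictly below $S_\omega(\phi_\omega)$. Note that the constraint $\|v\|_{L^2}\le\|\phi_\omega\|_{L^2}$ (rather than equality) plays no role in this particular lemma but is harmless, since the $L^2$-norm is exactly conserved.
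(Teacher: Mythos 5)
Your proof is correct and follows essentially the same route as the paper: conservation of $E$ and the $L^2$-norm gives the action and mass conditions, and Lemma~\ref{vari-ineq}~\eqref{vari-ineq2} combined with continuity of $t\mapsto\|u(t)\|_{L^{p+1}}$ rules out the trajectory ever touching the level set $\{\|\cdot\|_{L^{p+1}}=\|\phi_\omega\|_{L^{p+1}}\}$. Your version merely makes the intermediate value theorem step explicit where the paper simply invokes continuity.
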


\begin{proof}
Since $E$ and $\|\cdot\|_{L^2}$ are conserved quantities of \eqref{nls},
we have
$\|u(t)\|_{L^2}\le\|\phi_\omega\|_{L^2}$ and
$S_\omega(u(t))<S_\omega(\phi_\omega)$
for all $t\in I_{\max}$.
By Lemma~\ref{vari-ineq}~\eqref{vari-ineq2}, it follows that $\|u(t)\|_{L^{p+1}}\ne\|\phi_\omega\|_{L^{p+1}}$ for all $t\in I_{\max}$.
Therefore, 
by $\|u_0\|_{L^{p+1}}>\|\phi_\omega\|_{L^{p+1}}$ and the continuity of the solution $u(t)$,
we obtain $\|u(t)\|_{L^{p+1}}>\|\phi_\omega\|_{L^{p+1}}$
for all $t\in I_{\max}$.
This completes the proof.
\end{proof}

The following is the key lemma for our proof.

\begin{lemma}\label{key-lemma}
Let $v\in H^1(\mathbb{R}^N)$ satisfy
\[
\|v\|_{L^2}\le\|\phi_\omega\|_{L^2},\quad
\|v\|_{L^{p+1}}\ge\|\phi_\omega\|_{L^{p+1}},\quad
Q(v)\le0.
\]
Then
\begin{equation}\label{key-ineq}
\frac{Q(v)}{2}
\le S_\omega(v)-S_\omega(\phi_\omega).
\end{equation}
In particular,
if $u_0\in\mathcal{B}_\omega$, then the solution $u(t)$ of \eqref{nls} with $u(0)=u_0$ satisfies $u(t)\in\mathcal{B}_\omega$ for all $t\in I_{\max}$.
\end{lemma}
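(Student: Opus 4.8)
The plan is to reduce \eqref{key-ineq} to the scalar estimate
\begin{equation}\label{eq:red}
S_\omega(v)-\tfrac{1}{2}Q(v)\ge S_\omega(\phi_\omega),
\end{equation}
which is merely a rearrangement of \eqref{key-ineq}, and to prove \eqref{eq:red} by comparing $v$ with a dilation of itself. Since $\|v\|_{L^{p+1}}\ge\|\phi_\omega\|_{L^{p+1}}$ and $\|v^\lambda\|_{L^{p+1}}^{p+1}=\lambda^\beta\|v\|_{L^{p+1}}^{p+1}$ with $\beta>0$, there is a unique $\lambda_0\in(0,1]$ with $\|v^{\lambda_0}\|_{L^{p+1}}=\|\phi_\omega\|_{L^{p+1}}$, namely $\lambda_0=\bigl(\|\phi_\omega\|_{L^{p+1}}^{p+1}/\|v\|_{L^{p+1}}^{p+1}\bigr)^{1/\beta}$.

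Next I would record three facts. First, applying Lemma~\ref{vari-ineq}~\eqref{vari-ineq1} to $v^{\lambda_0}$ gives $K_\omega(v^{\lambda_0})\ge0$, which by \eqref{K^lam} and $\lambda_0^\beta\|v\|_{L^{p+1}}^{p+1}=\|\phi_\omega\|_{L^{p+1}}^{p+1}$ reads
\begin{equation}\label{eq:Kb}
\lambda_0^\alpha G(v)\le\lambda_0^2\|\nabla v\|_{L^2}^2+\omega\|v\|_{L^2}^2-\|\phi_\omega\|_{L^{p+1}}^{p+1};
\end{equation}
this is the substitute for the case distinction on $\|xv\|_{L^2}^2$ versus $\|x\phi_\omega\|_{L^2}^2$ used in \cite[Lemma~4]{Ohtapre}, and it is where the variational characterization of $\phi_\omega$ enters the argument. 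Second, $S_\omega'(\phi_\omega)=0$ forces $Q(\phi_\omega)=\partial_\lambda S_\omega(\phi_\omega^\lambda)|_{\lambda=1}=\langle S_\omega'(\phi_\omega),\tfrac{N}{2}\phi_\omega+x\cdot\nabla\phi_\omega\rangle=0$, so \eqref{p2el} is equivalent to
\begin{equation}\label{eq:gs}
\frac{\alpha(2-\alpha)}{2}\,G(\phi_\omega)\le\frac{\beta(\beta-2)}{p+1}\|\phi_\omega\|_{L^{p+1}}^{p+1},
\end{equation}
and, together with $K_\omega(\phi_\omega)=0$, also yields the identities $\omega\|\phi_\omega\|_{L^2}^2=\tfrac{2-\alpha}{2}G(\phi_\omega)+\tfrac{p+1-\beta}{p+1}\|\phi_\omega\|_{L^{p+1}}^{p+1}$ and $S_\omega(\phi_\omega)=\tfrac{p-1}{2(p+1)}\|\phi_\omega\|_{L^{p+1}}^{p+1}$ (cf.\ \eqref{skn}). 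Third, $\omega>0$: the infimum in \eqref{omega} is $\le\|\nabla w^\lambda\|_{L^2}^2-G(w^\lambda)=\lambda^2\|\nabla w\|_{L^2}^2-\lambda^\alpha G(w)$, which is negative for small $\lambda$ because $\alpha<2$, so $\omega_0>0$, and hence the hypothesis $\|v\|_{L^2}\le\|\phi_\omega\|_{L^2}$ gives $\omega\|v\|_{L^2}^2\le\omega\|\phi_\omega\|_{L^2}^2$.

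With these in hand I would expand $S_\omega(v)-\tfrac{1}{2}Q(v)=\tfrac{\omega}{2}\|v\|_{L^2}^2-\tfrac{2-\alpha}{4}G(v)+\tfrac{\beta-2}{2(p+1)}\|v\|_{L^{p+1}}^{p+1}$ (from \eqref{S^lam} and \eqref{Q^lam}); rewrite $S_\omega(\phi_\omega)$ in \eqref{eq:red} as $\tfrac{p-1}{2(p+1)}\lambda_0^\beta\|v\|_{L^{p+1}}^{p+1}$; bound $\omega\|v\|_{L^2}^2$ from below by \eqref{eq:Kb}; bound $\|\nabla v\|_{L^2}^2$ from above by the hypothesis $Q(v)\le0$, i.e.\ $\|\nabla v\|_{L^2}^2\le\tfrac{\alpha}{2}G(v)+\tfrac{\beta}{p+1}\|v\|_{L^{p+1}}^{p+1}$; and finally bound the ``wrong-sign'' term $G(v)$ from above by substituting $Q(v)\le0$, $\omega\|v\|_{L^2}^2\le\omega\|\phi_\omega\|_{L^2}^2$, \eqref{eq:gs}, and $\|\phi_\omega\|_{L^{p+1}}^{p+1}=\lambda_0^\beta\|v\|_{L^{p+1}}^{p+1}$ back into \eqref{eq:Kb}. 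After these substitutions \eqref{eq:red} reduces to a one-variable inequality for $\lambda_0\in(0,1]$ with coefficients depending only on $\alpha,\beta,p$ (both sides being proportional to $\|v\|_{L^{p+1}}^{p+1}$); its plausibility can be read off from the signs $2\lambda_0^\alpha-\alpha\lambda_0^2-(2-\alpha)\le0$ and $2\lambda_0^\beta-\beta\lambda_0^2+(\beta-2)\ge0$ on $(0,1]$ (both vanishing to second order at $\lambda_0=1$), but actually verifying that the fully assembled scalar inequality holds for every admissible $\lambda_0$ — while keeping the potential term under control — is the main obstacle. This is precisely the point at which $\beta>2>\alpha>0$ and assumption \eqref{p2el} are used, and it corresponds to the delicate second case of \cite[Lemma~4]{Ohtapre}.

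It remains to deduce the ``in particular'' statement from \eqref{key-ineq} and Lemma~\ref{inva-flow}. If $u_0\in\mathcal{B}_\omega\subset\mathcal{A}_\omega$, then $u(t)\in\mathcal{A}_\omega$ for all $t\in I_{\max}$ by Lemma~\ref{inva-flow}, so it suffices to check $Q(u(t))<0$ throughout $I_{\max}$. If not, then by continuity of $t\mapsto Q(u(t))$ and $Q(u_0)<0$ there would be a first $t_1\in I_{\max}$ with $Q(u(t_1))=0$; since $\|u(t_1)\|_{L^2}\le\|\phi_\omega\|_{L^2}$ and $\|u(t_1)\|_{L^{p+1}}>\|\phi_\omega\|_{L^{p+1}}$, applying \eqref{key-ineq} with $v=u(t_1)$ would give $0=\tfrac{1}{2}Q(u(t_1))\le S_\omega(u(t_1))-S_\omega(\phi_\omega)<0$, a contradiction. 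Hence $u(t)\in\mathcal{B}_\omega$ on $I_{\max}$.
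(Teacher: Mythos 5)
Your set-up coincides with the paper's: the same rescaling parameter $\lambda_0=\bigl(\|\phi_\omega\|_{L^{p+1}}^{p+1}/\|v\|_{L^{p+1}}^{p+1}\bigr)^{1/\beta}$, the same use of Lemma~\ref{vari-ineq}~\eqref{vari-ineq1} applied to $v^{\lambda_0}$ as the substitute for the comparison of $\|xv\|_{L^2}$ with $\|x\phi_\omega\|_{L^2}$ in \cite{Ohtapre}, and the same exploitation of $K_\omega(\phi_\omega)=Q(\phi_\omega)=0$, \eqref{p2el}, $Q(v)\le0$ and $\|v\|_{L^2}\le\|\phi_\omega\|_{L^2}$ to control $\omega\|v\|_{L^2}^2$, $\|\nabla v\|_{L^2}^2$ and $G(v)$. (The paper organizes this as showing $f(\lambda_0)\le f(1)$ for $f(\lambda)=S_\omega(v^\lambda)-\tfrac{\lambda^2}{2}Q(v)$, in which the gradient terms cancel, but that is the same computation as your direct expansion.) Your proof of the ``in particular'' statement is correct and matches the intended argument.

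However, there is a genuine gap at the point where you declare the verification of ``the fully assembled scalar inequality'' to be ``the main obstacle'' and stop. That inequality \emph{is} the substance of the lemma. After your substitutions the claim reduces to
\[
\frac{\beta}{2-\alpha\lambda^{2-\alpha}}\left(\lambda^{2-\alpha}
+\frac{\beta-\alpha-2}{\alpha}\lambda^{\beta-\alpha}\right)
\le\frac{2\lambda^\beta-\beta\lambda^2-2+\beta}{\alpha\lambda^2-2\lambda^\alpha-\alpha+2}
\qquad\text{for }\lambda\in(0,1),
\]
and the two sign facts you cite ($2\lambda^\alpha-\alpha\lambda^2-(2-\alpha)\le0$ and $2\lambda^\beta-\beta\lambda^2+\beta-2\ge0$ on $(0,1]$) only show that both sides are nonnegative, not that the left side is dominated by the right. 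Note also that the two sides agree to second order at $\lambda=1$, so no crude one-line estimate will close this; the paper devotes the bulk of its proof to exactly this step, via a three-level monotonicity cascade: the difference $g_1$ of the two sides satisfies $g_1(1)=0$, so it suffices that $g_1'\le0$ on $(0,1)$; this factors so that it suffices that a polynomial-type expression $g_2$ satisfies $g_2\le0$ with $g_2(1)=0$, hence that $g_2'\ge0$; this in turn reduces to $g_3(\lambda)=(2-\alpha)\lambda^{\beta-\alpha}-(\beta-\alpha)\lambda^{2-\alpha}+\beta-2\ge0$ with $g_3(1)=0$, and finally to $g_3'\le0$, which is where $\beta>2>\alpha>0$ is used. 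Without carrying out some such argument, your proof is incomplete at its decisive step.
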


\begin{proof}
Let
\[
\lambda_0
=\left(\frac{\|\phi_\omega\|_{L^{p+1}}^{p+1}}{\|v\|_{L^{p+1}}^{p+1}}\right)^{1/\beta}.
\]
Then we have
\[
0<\lambda_0\le1,\quad
\|v^{\lambda_0}\|_{L^2}
=\|v\|_{L^2}
\le\|\phi_\omega\|_{L^2},\quad
\|v^{\lambda_0}\|_{L^{p+1}}^{p+1}
=\lambda_0^\beta\|v\|_{L^{p+1}}^{p+1}
=\|\phi_\omega\|_{L^{p+1}}^{p+1}.
\]
Here, we define 
\begin{align*}
f(\lambda)
&=S_\omega(v^\lambda)
-\frac{\lambda^2}{2}Q(v) \\
&=-\frac12\left(\lambda^\alpha-\frac{\alpha\lambda^2}{2}\right)G(v)
+\frac{\omega}{2}\|v\|_{L^2}^2
-\frac{1}{p+1}\left(\lambda^{\beta}-\frac{\beta\lambda^2}{2}\right)\|v\|_{L^{p+1}}^{p+1}
\end{align*}
for $\lambda\in(0,1]$.
If we have $f(\lambda_0)\le f(1)$,
then it follows from Lemma~\ref{vari-ineq}~\eqref{vari-ineq2},
$Q(v)\le0$,
and $f(\lambda_0)\le f(1)$ that
\begin{equation} \label{conc}
S_\omega(\phi_\omega)
\le S_\omega(v^{\lambda_0})
\le S_\omega(v^{\lambda_0})-\frac{\lambda_0^2}{2}Q(v)
\le S_\omega(v)-\frac{Q(v)}{2},
\end{equation}
which is the desired inequality~\eqref{key-ineq}.

In what follows, we prove $f(\lambda_0)\le f(1)$,
which is rewritten as
\begin{align}\label{aim}
G(v)
\le\frac{2(2\lambda_0^\beta-\beta\lambda_0^2-2+\beta)}{(p+1)(\alpha\lambda_0^2-2\lambda_0^\alpha-\alpha+2)}
\|v\|_{L^{p+1}}^{p+1}.
\end{align}
By $\alpha K_\omega(\phi_\omega)
-(\alpha+1)Q(\phi_\omega)=0$
and \eqref{p2el},
we have
\begin{align*}
\alpha\omega\|\phi_\omega\|_{L^2}^2
&=\|\nabla\phi_\omega\|_{L^2}^2
-\frac{\alpha(\alpha-1)}{2}G(\phi_\omega)
+\left(\alpha-\frac{\beta(\alpha+1)}{p+1}\right)\|\phi_\omega\|_{L^{p+1}}^{p+1} \\
&\le\left(\alpha+\frac{\beta(\beta-\alpha-2)}{p+1}\right)\|\phi_\omega\|_{L^{p+1}}^{p+1}.
\end{align*}
Therefore, it follows from $\|v\|_{L^2}\le\|\phi_\omega\|_{L^2}$
and $\|\phi_\omega\|_{L^{p+1}}^{p+1}
=\lambda_0^\beta\|v\|_{L^{p+1}}^{p+1}$ that
\begin{equation}\label{ineq1}
\omega\|v\|_{L^2}^2
\le\left(1+\frac{\beta(\beta-\alpha-2)}{(p+1)\alpha}\right)\lambda_0^\beta\|v\|_{L^{p+1}}^{p+1}.
\end{equation}
By using Lemma~\ref{vari-ineq}~\eqref{vari-ineq1} for $v^{\lambda_0}$,
\eqref{K^lam},
\eqref{ineq1},
and $Q(v)\le0$,
we have
\begin{align*}
G(v)
&\le\lambda_0^{2-\alpha}\|\nabla v\|_{L^2}^2
+\lambda_0^{-\alpha}\omega\|v\|_{L^2}^2
-\lambda_0^{\beta-\alpha}\|v\|_{L^{p+1}}^{p+1} \\ \label{ineq2}
&\le\lambda_0^{2-\alpha}\|\nabla v\|_{L^2}^2
+\frac{\beta(\beta-\alpha-2)}{(p+1)\alpha}\lambda_0^{\beta-\alpha}\|v\|_{L^{p+1}}^{p+1} \\
&\le\frac{\alpha}{2}\lambda_0^{2-\alpha}G(v)
+\frac{\beta}{p+1}\left(\lambda_0^{2-\alpha}
+\frac{\beta-\alpha-2}{\alpha}\lambda^{\beta-\alpha}\right)\|v\|_{L^{p+1}}^{p+1},
\end{align*}
and thus,
\begin{equation}\label{ineq3}
G(v)
\le\frac{2\beta}{(p+1)(2-\alpha\lambda_0^{2-\alpha})}\left(\lambda_0^{2-\alpha}
+\frac{\beta-\alpha-2}{\alpha}\lambda^{\beta-\alpha}\right)\|v\|_{L^{p+1}}^{p+1}.
\end{equation}
In view of \eqref{aim} and \eqref{ineq3},
we only have to show that
\begin{equation*} 
\frac{\beta}{2-\alpha\lambda^{2-\alpha}}\left(\lambda_0^{2-\alpha}
+\frac{\beta-\alpha-2}{\alpha}\lambda^{\beta-\alpha}\right)
\le\frac{2\lambda_0^\beta-\beta\lambda_0^2-2+\beta}{\alpha\lambda_0^2-2\lambda_0^\alpha-\alpha+2}
\end{equation*}
for all $\lambda\in(0,1)$,
which is equivalent to
\[
g_1(\lambda)
\mathrel{\mathop:}=\frac{(2-\alpha\lambda^{2-\alpha})(2\lambda^\beta-\beta\lambda^2-2+\beta)}{\beta\lambda^{\beta-\alpha}(\alpha\lambda^2-2\lambda^\alpha-\alpha+2)}
-\frac{1}{\lambda^{\beta-2}}
-\frac{\beta-\alpha-2}{\alpha}
\ge0.
\]
Since $\lim_{\lambda\nearrow1}g_1(\lambda)=0$, it suffices to show that 
\begin{align*}
g_1'(\lambda)
=\frac{2(1-\lambda^{2-\alpha})}{\beta\lambda^{\beta-\alpha+1}(\alpha\lambda^2-2\lambda^\alpha-\alpha+2)^2}
\begin{aligned}[t]
&\big(2\alpha(2-\alpha)\lambda^\beta
-\alpha\beta(\beta-\alpha)\lambda^2 \\
&\quad+2\beta(\beta-2)\lambda^\alpha
-(\beta-\alpha)(\beta-2)(2-\alpha)\big)
\le0
\end{aligned}
\end{align*}
for all $\lambda\in(0,1)$,
which holds if we have
\begin{equation*}\label{aim3}
g_2(\lambda)
\mathrel{\mathop:}=2\alpha(2-\alpha)\lambda^\beta
-\alpha\beta(\beta-\alpha)\lambda^2
+2\beta(\beta-2)\lambda^\alpha
-(\beta-\alpha)(\beta-2)(2-\alpha)
\le0.
\end{equation*}
Since $g_2(1)=0$,
it is enough to show that
\[
g_2'(\lambda)
=2\alpha\beta\lambda^{\alpha-1}\left(
(2-\alpha)\lambda^{\beta-\alpha}
-(\beta-\alpha)\lambda^{2-\alpha}
+\beta-2)\right)
\ge0
\]
for all $\lambda\in(0,1)$.
This is equivalent to
\[
g_3(\lambda)
\mathrel{\mathop:}=(2-\alpha)\lambda^{\beta-\alpha}
-(\beta-\alpha)\lambda^{2-\alpha}
+\beta-2\ge0.
\]
Since $g_3(1)=0$ and
\[
g_3'(\lambda)
=-(\beta-\alpha)(2-\alpha)\lambda^{1-\alpha}(1-\lambda^{\beta-2})\le0,
\]
we have $g_3(\lambda)\ge0$ for all $\lambda\in(0,1)$.
Therefore,
we obtain $f(\lambda_0)\le f(1)$,
and thus, the inequality \eqref{conc} follows.

The last claim of Lemma~\ref{key-lemma} follows from Lemma~\ref{inva-flow} and \eqref{key-ineq}.
This completes the proof.
\end{proof}

\begin{proof}[Proof of Theorem~\ref{blowup}]
Let $u_0\in\mathcal{B}_\omega\cap\Sigma$
and $u(t)$ be the solution of \eqref{nls} with $u(0)=u_0$.
Then by the virial identity~\eqref{virial},
Lemma~\ref{key-lemma},
and the conservation of $S_\omega$,
we have
\begin{equation}\label{viri-ineq}
\frac{d^2}{dt^2}\|xu(t)\|_{L^2}^2
=8Q(u(t))
\le16\big(S_\omega(u(t))-S_\omega(\phi_\omega)\big)
=16\big(S_\omega(u_0)-S_\omega(\phi_\omega)\big)
<0
\end{equation}
for all $t\in I_{\max}$.

If $T^+=\infty$,
then it follows from \eqref{viri-ineq} that $\|xu(t)\|_{L^2}$ becomes negative for large $t$.
This is a contradiction.
Thus, the solution $u(t)$ blows up in finite time.
\end{proof}

\section{Strong instability of standing waves}\label{sec:SI}

In this section, we prove Theorem~\ref{mainthm}.
Here, we impose the same assumption as in Theorem~\ref{mainthm}.

\begin{lemma}\label{pib}
$\phi_\omega^\lambda\in\mathcal{B}_\omega$ for all $\lambda>1$.
\end{lemma}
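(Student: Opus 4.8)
The plan is to verify the four defining conditions of $\mathcal{B}_\omega$ for $v=\phi_\omega^\lambda$ with $\lambda>1$: namely $S_\omega(\phi_\omega^\lambda)<S_\omega(\phi_\omega)$, $\|\phi_\omega^\lambda\|_{L^2}\le\|\phi_\omega\|_{L^2}$, $\|\phi_\omega^\lambda\|_{L^{p+1}}>\|\phi_\omega\|_{L^{p+1}}$, and $Q(\phi_\omega^\lambda)<0$. The $L^2$-condition is immediate since the scaling $v\mapsto v^\lambda$ preserves the $L^2$-norm, so in fact $\|\phi_\omega^\lambda\|_{L^2}=\|\phi_\omega\|_{L^2}$. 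The $L^{p+1}$-condition follows from $\|\phi_\omega^\lambda\|_{L^{p+1}}^{p+1}=\lambda^\beta\|\phi_\omega\|_{L^{p+1}}^{p+1}$ with $\beta=N(p-1)/2>0$ and $\lambda>1$.

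The remaining two conditions concern the function $h(\lambda):=S_\omega(\phi_\omega^\lambda)$. First I would record that $\phi_\omega\in\mathcal{G}_\omega$ solves $S_\omega'(\phi_\omega)=0$, hence $h'(1)=\langle S_\omega'(\phi_\omega),\partial_\lambda\phi_\omega^\lambda|_{\lambda=1}\rangle=0$; and by hypothesis $h''(1)=\partial_\lambda^2S_\omega(\phi_\omega^\lambda)|_{\lambda=1}\le0$. Using the explicit form \eqref{S^lam}, compute
\[
h'(\lambda)
=\lambda\|\nabla\phi_\omega\|_{L^2}^2
-\frac{\alpha\lambda^{\alpha-1}}{2}G(\phi_\omega)
-\frac{\beta\lambda^{\beta-1}}{p+1}\|\phi_\omega\|_{L^{p+1}}^{p+1}
=\lambda^{-1}Q(\phi_\omega^\lambda),
\]
which is the identity $Q(v^\lambda)=\lambda\partial_\lambda S_\omega(v^\lambda)$ already noted in \eqref{Q^lam}. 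So proving $Q(\phi_\omega^\lambda)<0$ for $\lambda>1$ is the same as proving $h'(\lambda)<0$ for $\lambda>1$, and once that is known, $h(\lambda)-h(1)=\int_1^\lambda h'(s)\,ds<0$ gives the action condition simultaneously. Thus everything reduces to the single claim: $h'(\lambda)<0$ for all $\lambda>1$.

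To prove $h'(\lambda)<0$ for $\lambda>1$, I would exploit the three scaling exponents $\alpha<2<\beta$ together with $h'(1)=0$ and $h''(1)\le0$. Write $h'(\lambda)=A\lambda - \tfrac{\alpha}{2}B\lambda^{\alpha-1} - \tfrac{\beta}{p+1}C\lambda^{\beta-1}$ with $A=\|\nabla\phi_\omega\|_{L^2}^2\ge0$, $B=G(\phi_\omega)\ge0$, $C=\|\phi_\omega\|_{L^{p+1}}^{p+1}>0$. The condition $h'(1)=0$ reads $A=\tfrac{\alpha}{2}B+\tfrac{\beta}{p+1}C$, and $h''(1)\le0$ reads $A-\tfrac{\alpha(\alpha-1)}{2}B-\tfrac{\beta(\beta-1)}{p+1}C\le0$. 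I would substitute the first relation into $h'(\lambda)$ to get
\[
h'(\lambda)
=\frac{\alpha}{2}B\,(\lambda-\lambda^{\alpha-1})
+\frac{\beta}{p+1}C\,(\lambda-\lambda^{\beta-1}),
\]
and note that for $\lambda>1$ we have $\lambda-\lambda^{\alpha-1}>0$ (since $\alpha-1<1$) while $\lambda-\lambda^{\beta-1}<0$ (since $\beta-1>1$). So the first term is nonnegative and the second is strictly negative; the claim is that the negative term dominates. To see this, use $h''(1)\le0$: combining the two constraints eliminates $A$ and yields $\tfrac{\alpha(2-\alpha)}{2}B \le \tfrac{\beta(\beta-2)}{p+1}C$, i.e. the ``$\beta$-coefficient'' controls the ``$\alpha$-coefficient.'' Then the main obstacle is the elementary but slightly delicate real-variable inequality: show that $\tfrac{\alpha(2-\alpha)}{2}B\le\tfrac{\beta(\beta-2)}{p+1}C$ forces $\tfrac{\alpha}{2}B(\lambda-\lambda^{\alpha-1})<\tfrac{\beta}{p+1}C(\lambda^{\beta-1}-\lambda)$ for every $\lambda>1$. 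This should follow by comparing $\dfrac{\lambda^{\beta-1}-\lambda}{\beta-2}$ with $\dfrac{\lambda-\lambda^{\alpha-1}}{2-\alpha}$ for $\lambda>1$ — a convexity/monotonicity statement about $\dfrac{\lambda^{r-1}-\lambda}{r-2}$ as a function of the exponent $r$, increasing in $r$ for $\lambda>1$ — and I expect this comparison to be the one place requiring a careful (though short) argument. If one prefers to avoid the exponent-monotonicity lemma, an alternative is to argue directly on $h'$: one checks $h'(1)=0$, and that $\lambda h''(\lambda)+h'(\lambda)=\lambda^{-1}\,\partial_\lambda(\lambda Q(\phi_\omega^\lambda))$ has a sign pattern forcing $h'$ to become and stay negative past $\lambda=1$; but the substitution approach above is cleaner. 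Finally, having shown $h'(\lambda)<0$ for $\lambda>1$, we conclude $Q(\phi_\omega^\lambda)=\lambda h'(\lambda)<0$ and $S_\omega(\phi_\omega^\lambda)<S_\omega(\phi_\omega)$, which together with the two norm conditions gives $\phi_\omega^\lambda\in\mathcal{B}_\omega$.
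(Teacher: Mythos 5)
Your proposal is correct and follows the same overall strategy as the paper: both verify the two norm conditions by scaling, and both reduce the conditions $S_\omega(\phi_\omega^\lambda)<S_\omega(\phi_\omega)$ and $Q(\phi_\omega^\lambda)<0$ to the sign of $h'(\lambda)=\partial_\lambda S_\omega(\phi_\omega^\lambda)$ on $(1,\infty)$, starting from $h'(1)=0$, $h''(1)\le 0$, and the derived inequality $\tfrac{\alpha(2-\alpha)}{2}G(\phi_\omega)\le\tfrac{\beta(\beta-2)}{p+1}\|\phi_\omega\|_{L^{p+1}}^{p+1}$. The only divergence is the finishing calculus step: the paper shows $\partial_\lambda^3 S_\omega(\phi_\omega^\lambda)<0$ for $\lambda>1$ and propagates the signs of $h''$ and $h'$ forward from $\lambda=1$, whereas you eliminate $\|\nabla\phi_\omega\|_{L^2}^2$ via $h'(1)=0$ and compare the two remaining terms. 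The one step you flag as delicate does close: writing $\frac{\lambda^{r-1}-\lambda}{r-2}=\lambda\,\frac{e^{\mu s}-1}{s}$ with $s=r-2$ and $\mu=\log\lambda>0$, the strict monotonicity of $s\mapsto\frac{e^{\mu s}-1}{s}$ (a standard two-line computation) gives $\frac{\lambda-\lambda^{\alpha-1}}{2-\alpha}<\frac{\lambda^{\beta-1}-\lambda}{\beta-2}$ for $\lambda>1$ since $\alpha<2<\beta$, and combined with the coefficient inequality this yields $h'(\lambda)<0$, hence $Q(\phi_\omega^\lambda)=\lambda h'(\lambda)<0$ and $S_\omega(\phi_\omega^\lambda)<S_\omega(\phi_\omega)$. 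So your argument is complete modulo that short verification; the paper's third-derivative computation avoids the exponent-monotonicity lemma entirely, while your substitution makes the mechanism (the $\lambda^\beta$ term dominating the $\lambda^\alpha$ term once $h''(1)\le0$) more transparent.
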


\begin{proof}
By the definition of the scaling 
$\lambda\mapsto\phi_\omega^\lambda$,
we have
$\|\phi_\omega^\lambda\|_{L^2}=\|\phi_\omega\|_{L^2}$
and $\|\phi_\omega^\lambda\|_{L^{p+1}}=\lambda^{\beta/(p+1)}\|\phi_\omega\|_{L^{p+1}}>\|\phi_\omega\|_{L^{p+1}}$
for all $\lambda>1$,
where $\beta=N(p-1)/2>2$.

Now, we show that $S_\omega(\phi_\omega^\lambda)<S_\omega(\phi_\omega)$ and
$Q(\phi_\omega^\lambda)<0$ for all $\lambda>1$.
In view of \eqref{S^lam},
the function $S_\omega(\phi_\omega^\lambda)$ of $\lambda$ has the form 
$S_\omega(\phi_\omega^\lambda)
=A\lambda^2+B-C\lambda^\alpha-D\lambda^\beta$
with some $A,B,C,D>0$.
By $\partial_{\lambda}S_\omega(\phi_\omega^\lambda)|_{\lambda=1}=0$ and the assumption $\partial_{\lambda}^2S_\omega(\phi_\omega^\lambda)|_{\lambda=1}\le0$, we have
$-\beta(\beta-2)D
\le-\alpha(2-\alpha)C$.
This leads to
\begin{align*}
\partial_{\lambda}^3S_\omega(\phi_\omega^\lambda)
&=\alpha(\alpha-1)(2-\alpha)C\lambda^{\alpha-3}
-\beta(\beta-1)(\beta-2)D\lambda^{\beta-3} \\
&\le-\alpha(2-\alpha)\lambda^{\alpha-3}\left((\beta-1)\lambda^{\beta-\alpha}-(\alpha-1)\right)C
<0
\end{align*}
for all $\lambda>1$.
Therefore,
for $\lambda>1$,
it follows that 
$\partial_\lambda^2S_\omega(\phi_\omega^\lambda)<\partial_\lambda^2S_\omega(\phi_\omega^\lambda)|_{\lambda=1}\le0$,
$\partial_\lambda S_\omega(\phi_\omega^\lambda)<
\partial_\lambda S_\omega(\phi_\omega^\lambda)|_{\lambda=1}=0$,
and thus
$S_\omega(\phi_\omega^\lambda)<S_\omega(\phi_\omega)$.
Moreover, by differentiating \eqref{Q^lam},
we have
$\partial_{\lambda}Q(\phi_\omega^\lambda)
=\partial_{\lambda}S_\omega(\phi_\omega^\lambda)
+\lambda\partial_{\lambda}^2S_\omega(\phi_\omega^\lambda)
<0$.
This implies $Q(\phi_\omega^\lambda)<Q(\phi_\omega)=0$.
This completes the proof.
\end{proof}

Now, we prove the main theorem.

\begin{proof}[Proof of Theorem~\ref{mainthm}]

Let $\varepsilon>0$.
Then since $\phi_\omega^\lambda\to\phi_\omega$ in $H^1(\mathbb{R}^N)$ as $\lambda\searrow1$,
there exists $\lambda_0>1$ such that $\|\phi_\omega-\phi_\omega^{\lambda_0}\|_{H^1}<\varepsilon/2$.
Let $\chi\in C^\infty[0,\infty)$ be a function satisfying 
$0\le\chi\le1$,
$\chi(r)=1$ if $0\le r\le1$, and
$\chi(r)=0$ if $r\ge2$.
For $M>0$, 
we define a cutoff function $\chi_M\in C_\mathrm{c}^\infty(\mathbb{R}^N)$ by 
$\chi_M(x)=\chi(|x|/M)$.
Then we see that 
$\chi_M\phi_\omega^{\lambda_0}\to\phi_\omega^{\lambda_0}$ in $H^1(\mathbb{R}^N)$ as $M\to\infty$.
Moreover, we have
$\chi_M\phi_\omega^{\lambda_0}\in\Sigma$ and
$\|\chi_M\phi_\omega^{\lambda_0}\|_{L^2}\le\|\phi_\omega^{\lambda_0}\|_{L^2}=\|\phi_\omega\|_{L^2}$ for all $M>0$.
Therefore, by Lemma~\ref{pib} and the continuity of $S_\omega$, $\|\cdot\|_{L^{p+1}}$, and $Q$,
there exists $M_{0}>0$ such that 
$\|\phi_\omega^{\lambda_0}-\chi_{M_0}\phi_\omega^{\lambda_0}\|_{H^1}<\varepsilon/2$ and
$\chi_{M_0}\phi_\omega^{\lambda_0}\in\mathcal{B}_\omega\cap\Sigma$.
Thus, 
we obtain 
$\|\chi_{M_0}\phi_\omega^{\lambda_0}-\phi_\omega\|_{H^1}
<\varepsilon$,
and by Theorem~\ref{blowup}, the solution $u(t)$ with $u(0)=\chi_{M_0}\phi_\omega^{\lambda_0}$ blows up in finite time.
Hence,
the standing wave solution $e^{i\omega t}\phi_\omega$ of \eqref{nls} is strongly unstable.
\end{proof}

\paragraph*{Acknowledgements}

The authors would like to express their deepest gratitude to Yusuke Shimabukuro for valuable discussions.
This work was supported by JSPS KAKENHI Grant Numbers 15K04968 and 26247013.


Noriyoshi Fukaya

Department of Mathematics, Graduate School of Science, Tokyo University of Science, 
1-3 Kagurazaka, Shinjuku-ku, Tokyo 162-8601, Japan 

\textit{E-mail address}: \texttt{1116702@ed.tus.ac.jp}

\vspace\baselineskip

Masahito Ohta

Department of Mathematics, Tokyo University of Science, 
1-3 Kagurazaka, Shinjuku-ku, Tokyo 162-8601, Japan 

\textit{E-mail address}: \texttt{mohta@rs.tus.ac.jp}

\end{document}